\newtheorem{theorem}{Theorem}[section]
\newtheorem{lemma}[theorem]{Lemma}
\DeclareMathOperator{\argmin}{argmin}
\newcommand{\R}{\mathbb{R}}
\newcommand{\Dom}{\mathrm{Dom}}
\newcommand{\inner}[2]{\langle{#1},{#2}\rangle}
\newcommand{\Inner}[2]{\left\langle{#1},{#2}\right\rangle}
\newcommand{\norm}[1]{\|#1\|}
\newcommand{\tos}{\rightrightarrows}
\newcommand{\Z}{\mathcal{Z}}
\newcommand{\V}{\mathcal{V}}
\newcommand{\vgap}{\vspace{.1in}}
\newcommand{\tz}{\tilde z}
\newcommand{\bi}{\begin{itemize}}
\newcommand{\ei}{\end{itemize}}
\newcommand{\ba}{\begin{array}}
\newcommand{\ea}{\end{array}}
\begin{document}

\title{Iteration-complexity analysis  of a generalized  alternating  direction method of multipliers}

\author{V.A. Adona \thanks{Institute of Mathematics and Statistics, Federal University of Goias, Campus II- Caixa
    Postal 131, CEP 74001-970, Goi\^ania-GO, Brazil. 
    (E-mails: {\tt vandomat32@hotmail.com}, {\tt maxlng@ufg.br} and {\tt jefferson@ufg.br}).  The work of these authors was
    supported in part by  CNPq Grant  444134/2014-0, 309370/2014-0  and FAPEG/GO.}
    \and
    M.L.N. Gon\c calves \footnotemark[1]
    \and
      J.G. Melo \footnotemark[1]
  }


\maketitle

\begin{abstract}

This paper analyzes  the iteration-complexity  of a generalized  alternating  direction method of multipliers (G-ADMM)  for solving   linearly constrained convex problems. This ADMM variant, which was first proposed by {Bertsekas and Eckstein},
introduces a relaxation parameter $\alpha \in (0,2)$ into the second ADMM subproblem.
Our approach is to show that the G-ADMM is an instance of a hybrid proximal extragradient framework with some special properties, and, as a by product, we obtain ergodic  iteration-complexity for the G-ADMM with   $\alpha\in (0,2]$, improving and  complementing related results in the literature.
Additionally, we also present pointwise iteration-complexity  for the G-ADMM.
\\[2mm]
 
  2000 Mathematics Subject Classification: 
  47H05, 49M27, 90C25, 90C60, 
  65K10.
\\
\\   
Key words: generalized alternating direction method of multipliers,  hybrid extragradient method, convex program,
 pointwise iteration-complexity, ergodic iteration-complexity.
 \end{abstract}

%
%
%
%
%
%
%
%
%
%
%
%
%
%
%
%
%
%
%
%
%
%
%
%
%

\pagestyle{plain}

\section{Introduction} \label{sec:int}
This paper considers  the following  linearly constrained convex  optimization problem 
\begin{equation} \label{optl}
\min \{ f(x) + g(y) : A x + B y =b, \; x\in \R^n, y \in \R^p \}
\end{equation}
where $f: \R^{n} \to \R$ and $g:\R^{p} \to \R$ are convex functions,  
$A\in \R^{m \times n}$,  $B \in \R^{m\times p}$ and  $b \in \R^m$.
Problems with separable structure such as \eqref{optl} arises in many applications 
areas, for instance, machine learning, compressive sensing and image processing.
One popular method for solving  \eqref{optl}, taking advantages of its special structure,  is the alternating direction method of multipliers (ADMM)  \cite{0352.65034,0368.65053}; for detailed reviews, see  \cite{Boyd:2011,glowinski1984}.
Many variants of it have been considered  in the literature; see, for example, \cite{ChPo_ref2,Deng1,PADMM_Eckstein,GADMM2015,MJR,Gu2015,Hager,HeLinear,Lin,LanADMM}. 
The ADMM variant  studied here is  the generalized ADMM \cite{MR1168183} (G-ADMM) with proximal terms, described as follows:
given $(x_{k-1},y_{k-1},\gamma_{k-1})$  compute $(x_{k},y_{k},\gamma_{k})$~as
\begin{align} \nonumber
x_k &\in \argmin_{x} \left \{ f(x) - \inner{ \gamma_{k-1}}{Ax} +
\frac{\beta}{2} \| Ax+B y_{k-1} -b \|^2 +
\frac{1}{2} \| x-x_{k-1} \|^2_{H_1} \right\}, \\ \nonumber 
y_k &\in  \argmin_y \left \{ g(y) - \inner{\gamma_{k-1}}{By} +
\frac{\beta}{2} \| \alpha(Ax_k + B y_{k-1} - b) +B(y-y_{k-1})\|^2+\frac{1}{2} \| y-y_{k-1} \|^2_{H_2} \right\},\\ \label{ADMMclass}
\gamma_k &= \gamma_{k-1}-\beta\left[\alpha(Ax_k+By_{k-1}-b)+B(y_{k}-y_{k-1})\right] 
\end{align}
where $\beta > 0$ is a fixed penalty parameter, $(H_1,H_2)\in \R^{n \times n}\times\R^{p \times p}$
are symmetric and positive semi-definite matrices, $\alpha\in(0,2]$ is a relaxation factor and  $\|\cdot\|_{H_i}^{2}:=\inner{H_i(\cdot)}{\cdot}$, $i=1,2$.
Different ADMM  variants studied in the literature can be seen as particular instances of the G-ADMM by appropriately choosing the  matrices $H_{i}$ ($i=1,2$) and  the relaxation parameter $\alpha$.
By setting $(H_1,H_2)=(0,0)$ and $\alpha=1$, the G-ADMM  reduces to the standard ADMM. The use of over-relaxation parameter ($\alpha>1$) in some applications can accelerate  the standard ADMM; see, for instance, \cite{Ber1,MR1256136}. 
By choosing $(H_1, H_2)=(\tau_1 I_{n}-\beta A^{*}A$, $\tau_2 I_{p}-\beta B^{*}B$) for some $\tau_1 \ge \beta \|A\|^2$ , $\tau_2 \ge \beta \|B\|^2$ ($^*$ stands for the adjoint operator), the G-ADMM subproblems may become much easier to solve, since the quadratic terms involving $A^{*}A$ and $B^{*}B$ vanish;
see, for example, \cite{Deng1,Wang2012,Yang_linearizedaugmented} for discussion.
It is well-known  that an optimal solution $(x^*,y^*)$  for problem \eqref{optl} can be obtained by finding a solution  $(x^*,y^*,\gamma^*)$  of the following Lagrangian system
\begin{equation} \label{LagInclusion_Intro}
0\in  \partial f(x)- A^{*}\gamma,\quad  0\in  \partial g(y)- B^{*}\gamma,\quad Ax+By-b=0,
\end{equation}
where $\gamma^*$ is an associated Lagrange multiplier.

In this paper, we are interested in analyzing iteration-complexity of the G-ADMM to obtain an ``approximate solution" of the Lagrangian system~\eqref{LagInclusion_Intro}.
Specifically, for a given tolerance $\varepsilon>0$, we show that in at most $\mathcal{O}(1/\varepsilon)$ iterations of  the G-ADMM, we obtain, in the ergodic sense,  an ``$\varepsilon$-approximate" solution $(\hat x, \hat y, \hat\gamma)$  and a residual $\hat v=(\hat v_1,\hat v_2,\hat v_3)$ of \eqref{LagInclusion_Intro} satisfying
\[
\hat v_1 \in  \partial_{\varepsilon_1} f(\hat x)- A^{*}\hat \gamma,\quad  \hat v_2 \in \partial_{\varepsilon_2} g(\hat y)- B^{*}\hat \gamma, \quad  \hat v_3= A\hat x+B\hat y-b,\quad \|\hat v\|_{(H_1,H_2)}\leq\varepsilon,
\quad \varepsilon_1+\varepsilon_2\leq\varepsilon,
\]
where the symbol  $\partial_{\varepsilon}$ stands for  $\varepsilon-$subdiferential, and  $\| \cdot \|_{(H_1,H_2)}$ is a norm (seminorm) depending on the matrices $H_1$ and $H_2$.
Our approach is to show that the G-ADMM is an instance of a hybrid proximal extragradient (HPE) framework (see \cite{monteiro2010complexity,Sol-Sv:hy.ext}) with a very special property, namely,  a key parameter sequence $\{\rho_k\}$ associated to the sequence generated by the method is upper bounded by a multiple of $d_0$ (a parameter measuring, in some sense, the distance of the initial point to the solution set). This result is essential to obtain the ergodic iteration-complexity of the G-ADMM with relaxation parameter $\alpha \in (0, 2]$. Additionally,  we also present pointwise iteration-complexity for the G-ADMM with $\alpha \in (0, 2)$.

Convergence rates of the G-ADMM and related variants have been studied by many authors in different contexts. In \cite{GADMM2015}, the authors obtain  pointwise and ergodic convergence 
rate bounds for the G-ADMM  with $\alpha \in (0,2)$.  Paper \cite{nishihara2015general}  studies linear convergence of the G-ADMM under additional  assumptions. Some strategies are also proposed in order to choose the relaxation and penalty parameters. Linear convergence of the G-ADMM is also studied in \cite{tao2016optimal} on a general setting. Paper \cite{sun2016analysis} studies the  G-ADMM as a particular case of a general scheme in a Hilbert space and measures, in an ergodic sense, a ``partial" primal-dual gap associated to the augmented Lagrangian of problem \eqref{optl}.  Paper \cite{Et.Cor;X.Yuan2014} studies convergence rates of a generalized proximal point algorithm and obtains, as a by product, convergence rates of the particular instance of the G-ADMM in which  $(H_1,H_2)=(0,0)$. It is worth mentioning that the previous  ergodic convergence results for the G-ADMM are not focused in solving \eqref{LagInclusion_Intro} approximately in the sense of our paper.  
Iteration-complexity study of the standard ADMM and some  variants in the setting of the HPE framework have been considered in \cite{goncalves2017pointwise,MJR,monteiro2010iteration}.  Finally, convergence rates of   ADMM variants using  a different approach have been studied in  \cite{Cui,Deng1,MJR,Hager,HeLinear,He2015,Lin,nishihara2015general,LanADMM}, to name just a few.
\\

\noindent{\bf Organization of the paper.}  Section~\ref{sec:basandHPE} is divided into two subsections, Subsection~\ref{sec:bas}  presents our notation and basic results. Subsection~\ref{sec:smhpe} is devoted to the study of a modified  HPE framework and present its main iteration-complexity results  whose proofs are given in Section~\ref{sec:hpe_Analysis}. Section~\ref{sec:proximal ADMM_proof} is divided into three subsections. Subsection~\ref{sub:genr:ADMM} formally describes the generalized ADMM and Subsection~\ref{subsec:ADMM&HPE} contains some auxiliary results. 
The pointwise and ergodic  iteration-complexity results for the G-ADMM are given in Subsection~\ref{PointErg}. 
\section{Preliminary results} \label{sec:basandHPE}
This section is divided into two subsections: The first one  presents our notation and basic results, and  the second one   describes a modified  HPE framework and present its iteration-complexity bounds.

\subsection{Notation and basic definitions} \label{sec:bas}

This subsection presents some definitions, notation and basic results used in this paper.

Let  $\V$ be a finite-dimensional  
real vector space with inner product and associated norm denoted by $\inner{\cdot}{\cdot}$ and $\|\cdot\|$, respectively.
For a given  self-adjoint positive semidefinite linear operator $Q:\V\to \V$, 
the seminorm induced by $Q$ on $\V$ is defined by $\|\cdot\|_{Q}= \langle Q (\cdot), \cdot\rangle ^{1/2}$.
Since $\langle Q (\cdot), \cdot\rangle$ is symmetric and bilinear, for all $v,\tilde{v}\in\V$, we have 
\begin{equation}\label{fact}
2\left\langle Qv,\tilde{v}\right\rangle\leq \norm{v}_{Q}^{2}+\norm{\tilde{v}}_{Q}^{2}.
\end{equation}

Given a set-valued operator $T:\V\tos \V$, its domain and graph  are defined, respectively, as
\[  
\mbox{Dom}\,T:=\{v\in \V\,:\, T(v)\neq \emptyset\} \qquad \mbox{and}\qquad  
Gr(T)=\{ (v,\tilde{v})\in \V\times \V\;|\; \tilde{v} \in T(v)\}.
\]
The operator $T$ is said to be   monotone if 
\[
\inner{u-v}{\tilde{u}-\tilde{v}}\geq 0\qquad \forall \;  (u,\tilde{u}),\, (v,\tilde{v}) \,\in \, Gr(T).
\]
Moreover, $T$ is maximal monotone if it is monotone and there is no other monotone operator $S$ such that $Gr(T)\subset Gr(S)$.
Given a scalar $\varepsilon\geq0$, the 
 {$\varepsilon$-enlargement} $T^{[\varepsilon]}:\V\tos \V$
 of a monotone operator $T:\V\tos \V$ is defined as
\begin{align}
\label{eq:def.eps}
 T^{[\varepsilon]}(v)
 :=\{\tilde{v}\in \V\,:\,\inner{\tilde{v}-\tilde{u}}{v-u}\geq -\varepsilon,\;\;\forall (u,\tilde{u})\in Gr(T)\} \quad \forall\, v \in \V.
\end{align}
The 
{$\varepsilon$-subdifferential} of a 
 proper closed convex function $f:\V\to  [-\infty,\infty]$
is defined by
\[
\partial_{\varepsilon}f(v):=\{u\in \V\,:\,f(\tilde{v})\geq f(v)+\inner{u}{\tilde{v}-v}-\varepsilon,\;\;\forall \,\tilde{v}\in \V\} \qquad\forall\, v\in \V.
\]
When $\varepsilon=0$, then $\partial_0 f(v)$ 
is denoted by $\partial f(v)$
and is called the {subdifferential} of $f$ at $v$.
It is well known that the subdifferential  operator  of a proper closed convex function is maximal monotone~\cite{Rockafellar}.

The next theorem is a consequence of the transportation formula in \cite[Theorem 2.3]{Bu-Sag-Sv:teps1} combined with 
\cite[Proposition 2(i)]{Bu-Iu-Sv:teps}.
\begin{theorem}\label{for:trans}
Suppose $T:\V\tos \V$ is maximal monotone and let $\tilde{v}_i, v_i \in\V$, for $i = 1,\cdots,k$, be
such that $v_i \in T(\tilde{v}_i)$ and define
\[
\tilde{v}_{k}^{a}=\frac{1}{k}\sum_{i=1}^{k}\tilde{v}_i, \qquad v_{k}^{a}=\frac{1}{k}\sum_{i=1}^{k}v_i,\qquad 
\varepsilon_{k}^{a}=\frac{1}{k}\sum_{i=1}^{k}\inner{v_i}{\tilde{v}_i-\tilde{v}_{k}^{a}}.
\]
Then, the following hold:
\begin{itemize}
\item[(a)] $\varepsilon_{k}^{a}\geq 0$ and $v_{k}^{a}\in T^{[\varepsilon_{k}^{a}]}(\tilde{v}_{k}^{a})$;
\item[(b)] if, in addition, $T = \partial f$ for some proper closed and convex function $f$, then 
$v_{k}^{a}\in \partial_{\varepsilon_{k}^{a}}f(\tilde{v}_{k}^{a})$.
\end{itemize}
\end{theorem}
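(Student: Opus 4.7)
The plan is to prove part (a) by averaging pairwise monotonicity inequalities, and then upgrade to (b) by invoking the subgradient inequality together with convexity (Jensen).

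My first step would be to rewrite $\varepsilon_k^a$ in the cleaner form
\[
\varepsilon_k^a = \frac{1}{k}\sum_{i=1}^k \inner{v_i}{\tilde v_i} - \inner{v_k^a}{\tilde v_k^a},
\]
which is immediate since $\frac{1}{k}\sum_i \tilde v_i = \tilde v_k^a$ implies $\frac{1}{k}\sum_i\inner{v_i}{\tilde v_k^a} = \inner{v_k^a}{\tilde v_k^a}$. With this identity in hand, the nonnegativity of $\varepsilon_k^a$ drops out of the pairwise monotonicity inequality $\inner{v_i-v_j}{\tilde v_i - \tilde v_j}\geq 0$: summing over all $i,j=1,\ldots,k$ and expanding collapses exactly to $\frac{1}{k}\sum_i\inner{v_i}{\tilde v_i} \geq \inner{v_k^a}{\tilde v_k^a}$.

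To prove the enlargement inclusion $v_k^a\in T^{[\varepsilon_k^a]}(\tilde v_k^a)$, I would fix an arbitrary $(u,\tilde u)\in Gr(T)$, apply monotonicity to each pair $(\tilde v_i,v_i)$ against $(u,\tilde u)$ to get $\inner{v_i-\tilde u}{\tilde v_i-u}\geq 0$, and average over $i$. Expanding the average and substituting the identity derived above yields
\[
\inner{v_k^a - \tilde u}{\tilde v_k^a - u} \;\geq\; -\varepsilon_k^a,
\]
which is exactly the defining property of the enlargement in \eqref{eq:def.eps}. The only bookkeeping care is to respect the asymmetric input/output convention of \eqref{eq:def.eps}.

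For (b), with $T=\partial f$, I would start from the subgradient inequality $f(\tilde v)\geq f(\tilde v_i)+\inner{v_i}{\tilde v-\tilde v_i}$ for each $i$ and every $\tilde v\in\V$, average these $k$ inequalities, and rewrite the resulting cross term as $\inner{v_k^a}{\tilde v-\tilde v_k^a}-\varepsilon_k^a$ using the identity from (a). A final appeal to convexity (Jensen) gives $\frac{1}{k}\sum_i f(\tilde v_i)\geq f(\tilde v_k^a)$, which promotes the averaged inequality to $f(\tilde v)\geq f(\tilde v_k^a)+\inner{v_k^a}{\tilde v-\tilde v_k^a}-\varepsilon_k^a$, i.e.\ $v_k^a\in\partial_{\varepsilon_k^a}f(\tilde v_k^a)$. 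I do not anticipate a genuine obstacle: the whole argument is a sequence of standard monotonicity/convexity inequalities assembled around the averaging identity for $\varepsilon_k^a$; the only thing to watch is the sign conventions.
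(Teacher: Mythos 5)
Your argument is correct, but it is worth noting that the paper does not actually prove Theorem~\ref{for:trans}: it simply invokes the transportation formula of Burachik--Sagastiz\'abal--Svaiter together with a proposition of Burachik--Iusem--Svaiter, so the ``paper's proof'' is a citation. What you have written is, in effect, a self-contained elementary proof of those cited facts. Your key identity $\varepsilon_k^a=\frac{1}{k}\sum_i\inner{v_i}{\tilde v_i}-\inner{v_k^a}{\tilde v_k^a}$ is exactly the right pivot: the double-sum expansion of $\sum_{i,j}\inner{v_i-v_j}{\tilde v_i-\tilde v_j}\ge 0$ does collapse to $\frac{1}{k}\sum_i\inner{v_i}{\tilde v_i}\ge\inner{v_k^a}{\tilde v_k^a}$ (the cross terms pair up as $2\langle\sum_i v_i,\sum_j\tilde v_j\rangle$), the averaged monotonicity inequality against a fixed $(u,\tilde u)\in Gr(T)$ rearranges precisely to $\inner{v_k^a-\tilde u}{\tilde v_k^a-u}\ge-\varepsilon_k^a$, matching the convention of \eqref{eq:def.eps}, and in (b) the averaged subgradient inequalities combined with Jensen give the $\varepsilon$-subdifferential inclusion (note that $v_i\in\partial f(\tilde v_i)$ guarantees $f(\tilde v_i)$ is finite, so the averaging is legitimate). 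What your route buys is transparency and independence from the two external references; what the paper's route buys is brevity and the fact that the cited results are stated in greater generality (arbitrary convex combinations rather than uniform averages). Either is acceptable; there is no gap in your proposal.
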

\subsection
{A HPE-type framework} \label{sec:smhpe}

This subsection describes the modified HPE framework and its corresponding pointwise and ergodic  iteration-complexity bounds.  

Let $\Z$ be a finite-dimensional real vector space 
with inner product and induced norm denoted by $\inner{\cdot}{\cdot}$ and $\|\cdot\|=\sqrt{\inner{\cdot}{\cdot}}$, respectively.
Our problem of interest in this section is the monotone inclusion problem (MIP)
\begin{align}\label{eq:inc.p}
 0\in T(z)
\end{align}
where 
$T:\Z\tos \Z$ is a maximal monotone operator. We  assume that  the solution set  of~\eqref{eq:inc.p}, denoted by $T^{-1}(0)$, is nonempty.

We now state the modified HPE framework  for solving \eqref{eq:inc.p}.

\vgap
\vgap

\noindent
\fbox{
\begin{minipage}[h]{6.4 in}
{\bf A modified HPE framework for solving \eqref{eq:inc.p}}.
\\[2mm]
(0) Let $z_0 \in \Z$, $\eta_0 \in \R_{+}$, $\sigma \in [0, 1]$ and a self-adjoint 
 positive semidefinite linear operator   $M: \Z \to \Z $ be given, and set $k=1$;
 \\[2mm]
(1) obtain $(z_k,\tilde{z}_k,\eta_k) \in \Z \times \Z \times \mathbb{R}_{+}$   such that 
\begin{align}
 M (z_{k-1}-z_{k})  &\in T(\tz_k), \label{breg-subpro} \\[2mm]      
 \|{\tz}_k-z_{k}\|^2_{M} +\eta_k &\leq \sigma \|{\tz}_k-z_{k-1}\|_{M}^{2}+\eta_{k-1}; \label{breg-cond1}
\end{align}
(2) set $k\leftarrow k+1$ and go to step 1.
\noindent
\\[2mm]
{\bf end}
\end{minipage}
}
\vgap
\vgap

Some remarks about the modified HPE framework are in order. 
First, it is an instance of the non-Euclidean HPE framework of \cite{gonccalves2016extending}  with $\lambda_k=1, \varepsilon_k=0$ and   
$(d w)_{z}(z') = (1/2)\|z-z'\|_{M}^{2}$, $\forall z,z'\in \Z$.  Second, the way to obtain  $( z_k, \tilde{z}_k,\eta_k)$ will depend on the particular instance of the framework and properties of the operator $T$. In section~\ref{subsec:ADMM&HPE}, we will show that a generalized ADMM can be seen as an instance of the HPE framework specifying, in particular, how this triple  $(z_k,\tilde{z}_k, \eta_k)$ can be obtained.
Third, if $M$ is positive definite and $\sigma=\eta_0 = 0$, then   \eqref{breg-cond1}   implies that $\eta_k = 0$ and 
$z_k =\tilde{z}_k$ for every $k$, and hence that $M(z_{k-1}-z_{k}) \in  T({z}_k)$ in view of \eqref{breg-subpro}.
Therefore, the HPE error conditions \eqref{breg-subpro}-\eqref{breg-cond1} can be viewed as a relaxation of an iteration of the exact  proximal point method. 

In the following, we  present pointwise and ergodic  iteration-complexity  results for the modified HPE framework. 
Let $d_0$ be the distance of $z_0$ to the solution set of $T^{-1}(0)$, i.e., 
\begin{equation}\label{d0HPE}
d_{0} = \inf \{ \|z^*-z_{0}\|_{M}^{2} : z^* \in T^{-1}(0)\}.
\end{equation}
For  convenience of the reader and completeness, the proof of the next two  results are presented in Appendix~\ref{sec:hpe_Analysis}. 

\begin{theorem} {\rm ({\bf Pointwise convergence of the  HPE}) }\label{th:pointwiseHPE}
Consider the sequence $\{(z_k,\tilde z_k,\eta_k)\}$  generated by the modified HPE framework with $\sigma <1 $.
 Then, for every $k \ge 1$, there hold $0\in M (z_{k}-z_{k-1})+T(\tz_k)$ and  there exists $i\leq k$ such that
  \[
  \|z_{i}-z_{i-1}\|_M \leq
\frac{1}{\sqrt{k}} \sqrt{\frac{2(1+\sigma)d_0+4\eta_0}{1-\sigma}},
\]
where $d_0$ is as defined in \eqref{d0HPE}.
\end{theorem}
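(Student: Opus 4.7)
The plan is to follow the classical Fej\'er-monotonicity argument adapted to the seminorm $\|\cdot\|_M$ and to the extra tolerance sequence $\{\eta_k\}$. The inclusion $0\in M(z_{k}-z_{k-1})+T(\tz_k)$ is simply a rewriting of \eqref{breg-subpro}, so the real work is the pointwise bound. The strategy is to produce a one-step recursion of the form
\begin{equation*}
\|z_k-z^*\|_M^2+\eta_k \;\le\; \|z_{k-1}-z^*\|_M^2+\eta_{k-1} \;-\;(1-\sigma)\,\|\tz_k-z_{k-1}\|_M^2 ,
\end{equation*}
valid for every $z^* \in T^{-1}(0)$, and then telescope. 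To obtain it, I would fix $z^*\in T^{-1}(0)$, apply monotonicity of $T$ to the pairs $(\tz_k,M(z_{k-1}-z_k))\in \mathrm{Gr}(T)$ and $(z^*,0)\in\mathrm{Gr}(T)$, yielding $\inner{M(z_{k-1}-z_k)}{\tz_k-z^*}\ge 0$, and then expand this inner product using the polarization identity for the seminorm to get
\begin{equation*}
2\inner{M(z_{k-1}-z_k)}{\tz_k-z^*} = \|z_{k-1}-z^*\|_M^2-\|z_k-z^*\|_M^2+\|\tz_k-z_k\|_M^2-\|\tz_k-z_{k-1}\|_M^2.
\end{equation*}
Feeding \eqref{breg-cond1} into the last two terms produces the desired recursion.

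Telescoping this from $k=1$ to $k=N$ and taking the infimum over $z^*\in T^{-1}(0)$ gives
\begin{equation*}
(1-\sigma)\sum_{k=1}^{N}\|\tz_k-z_{k-1}\|_M^2 \;\le\; d_0+\eta_0 .
\end{equation*}
To convert this into a bound on $\|z_k-z_{k-1}\|_M$, I would use the triangle-type estimate $\|z_k-z_{k-1}\|_M^2\le 2\|\tz_k-z_{k-1}\|_M^2+2\|\tz_k-z_k\|_M^2$ together with \eqref{breg-cond1} (in the form $\|\tz_k-z_k\|_M^2 \le \sigma\|\tz_k-z_{k-1}\|_M^2+\eta_{k-1}-\eta_k$), obtaining
\begin{equation*}
\|z_k-z_{k-1}\|_M^2 \;\le\; 2(1+\sigma)\|\tz_k-z_{k-1}\|_M^2 + 2(\eta_{k-1}-\eta_k),
\end{equation*}
so the $\eta$-terms telescope to $2\eta_0$ after summing.

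Putting the two sums together gives $\sum_{k=1}^{N}\|z_k-z_{k-1}\|_M^2 \le \frac{2(1+\sigma)(d_0+\eta_0)}{1-\sigma}+2\eta_0 = \frac{2(1+\sigma)d_0+4\eta_0}{1-\sigma}$, and the announced pointwise bound follows from the pigeonhole principle $\min_{1\le i\le N}\|z_i-z_{i-1}\|_M^2 \le \frac{1}{N}\sum_{k=1}^{N}\|z_k-z_{k-1}\|_M^2$. The only delicate step is step two: choosing the right polarization identity so that the cross terms $\|\tz_k-z_k\|_M^2-\|\tz_k-z_{k-1}\|_M^2$ appear exactly in the combination controlled by \eqref{breg-cond1}; everything else is bookkeeping.
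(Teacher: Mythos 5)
Your proof is correct and follows essentially the same route as the paper: you re-derive inline the key one-step recursion (which the paper isolates as Lemma~\ref{lema_desigualdades}(b), proved there by the same polarization identity plus monotonicity), then combine the telescoped sum with the estimate $\|z_k-z_{k-1}\|_M^2\le 2(1+\sigma)\|\tz_k-z_{k-1}\|_M^2+2(\eta_{k-1}-\eta_k)$ and the pigeonhole principle, arriving at the identical constant $\frac{2(1+\sigma)d_0+4\eta_0}{1-\sigma}$. No gaps.
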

%

Next, we present the  ergodic convergence  of the modified HPE framework. Before, let us consider the following ergodic sequences
\begin{equation}\label{SeqErg}
 \tilde z^a_{k} = \frac{1}{k}\sum_{i=1}^k\tilde z_i,\quad r^a_{k} = \frac{1}{k}\sum_{i=1}^k(z_{i}-z_{i-1}), \quad
\varepsilon^a_{k} := \frac{1}{k} \sum_{i=1}^k\inner{M(z_{i}-z_{i-1})}{\tilde z^a_{k}-\tilde{z}_i },\quad \forall k\geq 1.
\end{equation}
\begin{theorem} {\bf (Ergodic convergence of the  HPE)}\label{th:ergHPE}
Consider the ergodic sequence $\{(\tilde z_k^a, r^a_{k},\varepsilon^a_{k})\}$ as in \eqref{SeqErg}.  
For every $k\geq 1$, there hold $\varepsilon^a_{k}\geq 0$, $0\in Mr^a_k + T^{[\varepsilon_k^a]}(\tz^a_k)$ and
\begin{align*}
 \|r^a_k\|_M &\le \frac{2\sqrt{d_0+\eta_0}}{k}, \quad
 \varepsilon^a_{k} \leq 
\frac{3\left[ 3 (d_{0} +\eta_0) + \sigma \rho_k\right]}{2k},
\end{align*}
where
 \begin{equation}\label{def:rho}
\rho_k := \displaystyle\max_{i=1,\ldots,k}\|\tilde z_{i}-z_{i-1}\|_{M}^{2},
\end{equation}
and $d_0$ is as defined in \eqref{d0HPE}.
Moreover, the sequence $\{\rho_k\}$ is bounded under either
one of the following situations:
\begin{itemize}
\item[(a)]
$\sigma<1$, in which case
\begin{equation} \label{def:tauk}
\rho_k \le \frac{d_0+\eta_0}{1-\sigma};
\end{equation}
\item[(b)]
$\Dom\, T$ is bounded, in which case
\[
\rho_k \leq  2[ d_0 +\eta_0+ D]  \label{def:tauk1},
\]
where $D := \sup \{ \|y'- y\|_{M}^{2} :  y,y' \in \Dom \,T\}$.

\end{itemize}
\end{theorem}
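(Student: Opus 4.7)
The plan is to establish the four claims in order: the inclusion (with $\varepsilon_k^a \geq 0$), the $\|r_k^a\|_M$ bound, the $\varepsilon_k^a$ bound, and the two $\rho_k$ bounds. The two main tools are the transportation formula (Theorem~\ref{for:trans}) and a Fej\'er-type monotonicity inequality derived by combining the monotonicity of $T$ with \eqref{breg-cond1}.

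For the inclusion and non-negativity of $\varepsilon_k^a$, I would apply Theorem~\ref{for:trans} with $v_i = M(z_{i-1}-z_i)$ and $\tilde v_i = \tilde z_i$; the relation $v_i \in T(\tilde v_i)$ is exactly \eqref{breg-subpro}, and $v_k^a = M(z_0-z_k)/k = -Mr_k^a$, so the theorem yields $\varepsilon_k^a \geq 0$ and $0 \in Mr_k^a + T^{[\varepsilon_k^a]}(\tilde z_k^a)$. Next, for any $z^* \in T^{-1}(0)$, monotonicity of $T$ applied to $M(z_{k-1}-z_k) \in T(\tilde z_k)$ and $0 \in T(z^*)$ gives $\langle z_{k-1}-z_k,\tilde z_k - z^*\rangle_M \geq 0$; expanding $\tilde z_k - z^* = (\tilde z_k - z_k) + (z_k - z^*)$ and using the identity $2\langle a-b,a-c\rangle_M = \|a-b\|_M^2 + \|a-c\|_M^2 - \|b-c\|_M^2$ produces the key inequality $\|z_k - z^*\|_M^2 \leq \|z_{k-1}-z^*\|_M^2 + \|\tilde z_k - z_k\|_M^2 - \|\tilde z_k - z_{k-1}\|_M^2$, which combined with \eqref{breg-cond1} gives the strengthened Fej\'er inequality
\[
\|z_k - z^*\|_M^2 + (1-\sigma)\|\tilde z_k - z_{k-1}\|_M^2 + \eta_k \leq \|z_{k-1}-z^*\|_M^2 + \eta_{k-1}.
\]
Telescoping this and choosing $z^*$ near the infimum in \eqref{d0HPE} yields $\|z_k - z^*\|_M^2 \leq d_0 + \eta_0$; since $r_k^a = (z_k - z_0)/k$, the triangle inequality then gives $\|r_k^a\|_M \leq 2\sqrt{d_0+\eta_0}/k$.

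For the $\varepsilon_k^a$ bound, I would sum the same quadratic expansion to derive, for any $z \in \Z$, the telescoping identity
\[
2\sum_{i=1}^k \langle M(z_{i-1}-z_i),\tilde z_i - z\rangle = \sum_{i=1}^k\bigl[\|\tilde z_i - z_i\|_M^2 - \|\tilde z_i - z_{i-1}\|_M^2\bigr] + \|z_0 - z\|_M^2 - \|z_k - z\|_M^2.
\]
Setting $z = \tilde z_k^a$ turns the left-hand side into $2k\varepsilon_k^a$; the first sum on the right is $\leq \eta_0$ by \eqref{breg-cond1} with $\sigma \leq 1$, the term $-\|z_k - \tilde z_k^a\|_M^2$ is dropped, and $\|z_0 - \tilde z_k^a\|_M^2$ is controlled by inserting a $z^* \in T^{-1}(0)$, using the Fej\'er bound $\|z_i - z^*\|_M^2 \leq d_0+\eta_0$, the HPE bound $\|\tilde z_i - z_i\|_M^2 \leq \sigma\rho_k + \eta_0$, and Jensen's inequality on $\|\tilde z_k^a - z^*\|_M^2 \le k^{-1}\sum_i\|\tilde z_i - z^*\|_M^2$; a careful application of Young's inequality then recovers the coefficients $9/2$ and $3/2$ in the claimed bound.

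Finally, for (a), the Fej\'er inequality gives $(1-\sigma)\|\tilde z_k - z_{k-1}\|_M^2 \leq (\|z_{k-1}-z^*\|_M^2 + \eta_{k-1}) - (\|z_k - z^*\|_M^2 + \eta_k) \leq d_0+\eta_0$, yielding $\rho_k \leq (d_0+\eta_0)/(1-\sigma)$. For (b), boundedness of $\Dom T$ gives $\|\tilde z_k - z^*\|_M^2 \leq D$ for any $z^* \in T^{-1}(0) \subset \Dom T$, while the Fej\'er inequality gives $\|z^* - z_{k-1}\|_M^2 \leq d_0+\eta_0$; then $\|\tilde z_k - z_{k-1}\|_M^2 \leq 2\|\tilde z_k - z^*\|_M^2 + 2\|z^* - z_{k-1}\|_M^2 \leq 2(D+d_0+\eta_0)$. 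The main obstacle will be step (iii): while the telescoping identity itself is routine, tracking the cross terms through Jensen's and Young's inequalities with the right calibration to produce exactly the stated constants requires careful bookkeeping.
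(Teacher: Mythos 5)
Your proposal follows essentially the same route as the paper: the inclusion and $\varepsilon_k^a\geq 0$ via the transportation formula with $v_i=M(z_{i-1}-z_i)$, the Fej\'er-type inequality (which is exactly Lemma~\ref{lema_desigualdades}) obtained from monotonicity of $T$ plus \eqref{breg-cond1}, the $\|r_k^a\|_M$ bound from $kr_k^a=z_k-z_0$, the telescoped identity evaluated at $z=\tilde z_k^a$ for the $\varepsilon_k^a$ bound, and the same arguments for (a) and (b). All of that is sound.

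The one place your sketch does not deliver the statement as written is the calibration of the $\varepsilon_k^a$ bound. Two issues. First, your intermediate claim $\|\tilde z_i-z_i\|_M^2\leq\sigma\rho_k+\eta_0$ needs $\eta_{i-1}\leq\eta_0$, which the framework does not guarantee ($\{\eta_k\}$ need not be monotone); what one actually gets from Lemma~\ref{lema_desigualdades}(b) is $\eta_{i-1}\leq \|z^*-z_0\|_M^2+\eta_0$, hence $\|\tilde z_i-z_i\|_M^2\leq\sigma\rho_k+d_0+\eta_0$. Second, and more importantly, your decomposition uses two nested two-term splits (insert $z^*$ between $z_0$ and $\tilde z_k^a$, then insert $z_i$ between $z^*$ and $\tilde z_i$), which puts a factor $2\cdot 2=4$ in front of $\|\tilde z_i-z_i\|_M^2$; tracing through, this yields $2k\varepsilon_k^a\leq 10d_0+9\eta_0+4\sigma\rho_k$, which is of the right form but strictly worse than the stated $9(d_0+\eta_0)+3\sigma\rho_k$ in the coefficient of $\sigma\rho_k$. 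No amount of Young-inequality tuning fixes this within your decomposition. The paper instead applies Jensen first, $\|\tilde z_k^a-z_0\|_M^2\leq\max_i\|\tilde z_i-z_0\|_M^2$, and then uses the single three-term expansion $\|\tilde z_i-z_0\|_M^2\leq 3\left[\|\tilde z_i-z_i\|_M^2+\|z^*-z_i\|_M^2+\|z^*-z_0\|_M^2\right]$ together with Lemma~\ref{lema_desigualdades}(b) applied to the combined quantity $\|z^*-z_{i-1}\|_M^2+\eta_{i-1}$; this is what produces the uniform factor $3$ and hence the stated constants. Everything else in your proof is correct.
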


If $\sigma<1$ or  $\Dom \,T$ is bounded, it follows from Theorem~\ref{th:ergHPE}  that $\{\rho_k\}$ is bounded and hence  $\max\{ \|r^a_k\|_M,\varepsilon^a_k\}=\mathcal{O}(1/k)$.
However,  it may happen that the sequence $\{\rho_k\}$ is bounded even when $\sigma=1$.  Indeed, in the next section, we will present  a generalized ADMM which is an instance of the modified HPE framework satisfying this case (see Lemma~\eqref{lem:aux3}).


\section{The generalized ADMM and its convergence rates}\label{sec:proximal ADMM_proof}
The main goal of this section is to describe the generalized ADMM for solving  \eqref{optl}
and  present  pointwise and ergodic   iteration-complexity results for it.
Our   iteration-complexity  bounds  are obtained  by showing that this ADMM variant is a special case   of the modified HPE framework of Section \ref{sec:smhpe}.


Throughout this section, we assume that:
\begin{itemize}
  \item[\bf A1)] the
  problem \eqref{optl} has an optimal solution $(x^*,y^*)$ and an associated Lagrange multiplier $\gamma^*$, or equivalently,
   the inclusion
\begin{equation} \label{FAB}
0\in T(x,y,\gamma) := \left[ \begin{array}{c}  \partial f(x)- A^{*}\gamma \\ \partial g(y)- B^{*}\gamma \\ Ax+By-b
\end{array} \right] 
\end{equation}
has a solution $(x^*,y^*,\gamma^*)$;
\end{itemize}

\subsection{The generalized ADMM} \label{sub:genr:ADMM}

In this subsection, we recall the  generalized ADMM first proposed by  Eckstein and Bertsekas (see \cite{MR1256136,MR1168183,GADMM2015}) for solving  \eqref{optl}.

\vgap
\vgap
\noindent
\fbox{
\begin{minipage}[h]{6.4 in}
{\bf Generalized ADMM}
\\[2mm]
(0) Let an initial point $(x_0,y_0,\gamma_0) \in \R^{n}\times \R^{p}\times \R^{m}$, a penalty parameter $\beta>0$, 
a relaxation factor $\alpha\in(0,2]$, and  symmetric positive semidefinite matrices $H_1 \in \R^{n \times n}$ and $H_2\in \R^{p \times p}$ 
be given, and set $k=1$;
\\[2mm]
(1)    compute an optimal solution $x_k \in \R^{n}$  of the subproblem
\begin{equation} \label{def:tsk-admm}
\min_{x \in \R^{n}} \left \{ f(x) - \inner{ \gamma_{k-1}}{Ax} +
\frac{\beta}{2} \| Ax+B y_{k-1} - b \|^2+\frac{1}{2}\|x- x_{k-1}\|_{H_1}^2 \right\}
\end{equation}
and compute an optimal solution $y_k\in \R^{p}$ of the subproblem
\begin{equation} \label{def:tyk-admm}
\min_{y \in \R^{p}} \left \{ g(y) - \inner{ \gamma_{k-1}}{By} +
\frac{\beta}{2} \| \alpha(Ax_{k}+B y_{k-1} - b)+B(y-y_{k-1}) \|^2 +\frac{1}{2}\|y- y_{k-1}\|_{H_2}^2\right\};
\end{equation}
\\[2mm]
(2) set 
\begin{equation}\label{admm:eqxk}
\gamma_k = \gamma_{k-1}-\beta[\alpha(Ax_{k}+B y_{k-1} - b)+B(y_{k}-y_{k-1})]
\end{equation}
and $k \leftarrow k+1$, and go to step~(1).
\\[2mm]
{\bf end}
\end{minipage}
}
\\[2mm]

The generalized ADMM  has different features depending on the choices of the operators $H_1$, $H_2$,  and the relaxation factor $\alpha$. 
For instance, by taking $\alpha=1$ and  $(H_1,H_2)=(0,0)$, it reduces to the standard ADMM, and  $\alpha=1$ and $(H_1,H_2)=(\tau_1 I_n-\beta A^{*}A,\tau_2 I_p-\beta B^{*}B)$ with  $\tau_1>\beta\|A^*A\|$ and $\tau_2>\beta\|B^*B\|$, it reduces to the linearized ADMM.
The latter method basically consists of canceling the quadratic terms $(\beta/2)\|Ax\|^2$ and $(\beta/2)\|By\|^2$ in \eqref{def:tsk-admm} and \eqref{def:tyk-admm}, respectively. More specifically, the subproblems  \eqref{def:tsk-admm} and \eqref{def:tyk-admm} become
$$
\min_{x \in \R^{n}} \left \{ f(x) - \inner{ \gamma_{k-1}-\beta(Ax_{k-1}+B y_{k-1} - b)}{Ax} +\frac{\tau_1}{2}\|x- x_{k-1}\|^2 \right\},
$$
$$
\min_{y \in \R^{p}} \left \{ g(y) - \inner{ \gamma_{k-1}-\alpha\beta(Ax_{k}+B y_{k-1} - b)}{By} +\frac{\tau_2}{2}\|y- y_{k-1}\|^2\right\}.
$$
In many applications,  the  above subproblems are  much easier to solve or  even have  closed-form solutions (see   \cite{HeLinear,Wang2012,Yang_linearizedaugmented} for more details). 
We also mention that depending on the structure of problem~\eqref{optl}, other choices of $H_1$ and $H_2$  may be recommended; see, for instance,  \cite{Deng1} (although the latter reference  considers $\alpha=1$, it is clear that the same discussion regarding the choices of $H_1$ and $H_2$ holds  for arbitrary $\alpha\in(0,2)$). 
The generalized ADMM with over-relaxation parameter ($\alpha >1$)  may present computational advantages over the standard ADMM (see, for example, \cite{MR1256136}).
\subsection{The generalized ADMM as an instance of the modified HPE framework}\label{subsec:ADMM&HPE}
Our aim in this subsection is to show that the  generalized ADMM is an instance  of the modified HPE framework for solving the inclusion problem \eqref{FAB} and, as a by-product,    pointwise and ergodic  iteration-complexity bounds results for the generalized ADMM will be presented in Subsection~\ref{PointErg}.

Let us first  introduce the elements required by the setting of Subsection \ref{sec:smhpe}.
 Consider  the vector space $\Z:=\R^{n}\times\R^{p}\times \R^{m}$, the linear operator   
\begin{equation}\label{def:matrixM}
M:=\left[ 
\begin{array}{ccc} 
H_1 &0&0\\
0&(H_2+\frac{\beta}{\alpha} B^*B)&\frac{(1-\alpha)}{\alpha}B^*\\[2mm]
0&\frac{(1-\alpha)}{\alpha}B&\frac{1}{\alpha\beta}I_{m}
\end{array} \right],
\end{equation}
and the quantity 
\begin{equation}\label{def:d0admm}
d_0:=\inf_{(x,y,\gamma) \in T^{-1}(0)} \left\{\|(x-x_0,y-y_0,\gamma-\gamma_0)\|^{2}_{M}\right\}.
\end{equation}
It is easy to verify that $M$  is a symmetric positive semidefinite matrix for every $\beta>0$ and $\alpha\in (0,2]$.
Let $\{(x_k,y_k,\gamma_k)\}$ be the sequence generated by the generalized ADMM. In order to simplify some relations in the results below, define the sequence 
  $\{(\Delta x_k,\Delta y_k,\Delta \gamma_k,\tilde{\gamma}_k)\}$ as
\begin{equation}\label{xtilde} 
\Delta x_k = x_{k}-x_{k-1},\quad\Delta y_k=y_{k}-y_{k-1},\quad\Delta\gamma_k=\gamma_{k}-\gamma_{k-1},\quad 
\tilde{\gamma}_{k}=\gamma_{k-1}-\beta(Ax_{k}+By_{k-1}-b)
\end{equation}
for every $k\geq 1$.  
  
We next present  two technical results on the generalized ADMM. 

\begin{lemma} \label{pr:aux}
Let $\{(x_k,y_k,\gamma_k)\}$ be  generated by the generalized ADMM and consider
$\{(\Delta x_k, \Delta y_k, \Delta\gamma_k, \tilde{\gamma}_k)\}$   as in \eqref{xtilde}. Then, for every $k\geq 1$,
\begin{align}
\tilde{\gamma}_k-\gamma_{k-1}&=\frac{1}{\alpha}\left[\Delta \gamma_k+\beta B\Delta y_k\right], \label{aux.3}\\[3mm]
0&\in H_1\Delta x_k+\left[ \partial f(x_k)-A^*\tilde{\gamma}_k\right],  \label{aux.0}\\[3mm]
0&\in(H_2+\frac{\beta}{\alpha} B^*B)\Delta y_k+\frac{(1-\alpha)}{\alpha}B^*\Delta \gamma_k+\left[\partial g(y_k)-B^*\tilde{\gamma}_k\right],\label{aux.2}\\[3mm]
0&=\frac{(1-\alpha)}{\alpha}B\Delta y_k+\frac{1}{\alpha\beta}\Delta \gamma_k+\left[Ax_k+By_k-b \right].\label{aux.1}
\end{align}
As a consequence,  $z_k:=(x_k,y_k, \gamma_k)$ and $\tilde z_k:=(x_k,y_k,\tilde \gamma_k)$  
satisfy the inclusion~\eqref{breg-subpro}  with $T$ and $M$ as in \eqref{FAB} and \eqref{def:matrixM}, respectively.
\end{lemma}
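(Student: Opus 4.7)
The plan is to derive each of the four identities directly from the definitions, essentially by unpacking the update rules \eqref{def:tsk-admm}--\eqref{admm:eqxk} and the definition of $\tilde\gamma_k$ in \eqref{xtilde}, and then assemble them into the HPE inclusion.

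First I would prove \eqref{aux.3} by straight algebra. From the definition $\tilde\gamma_k = \gamma_{k-1}-\beta(Ax_k+By_{k-1}-b)$ we get $\alpha(\tilde\gamma_k-\gamma_{k-1}) = -\alpha\beta(Ax_k+By_{k-1}-b)$, while the multiplier update \eqref{admm:eqxk} gives $\Delta\gamma_k = -\alpha\beta(Ax_k+By_{k-1}-b) -\beta B\Delta y_k$. Comparing the two yields \eqref{aux.3}.

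Next, for \eqref{aux.0}, I would write the first-order optimality condition of the $x$-subproblem \eqref{def:tsk-admm}:
\begin{equation*}
0 \in \partial f(x_k) - A^*\gamma_{k-1} + \beta A^*(Ax_k+By_{k-1}-b) + H_1\Delta x_k.
\end{equation*}
Recognizing the middle two terms as $-A^*\tilde\gamma_k$ via the definition of $\tilde\gamma_k$ gives \eqref{aux.0}. The proof of \eqref{aux.1} is a direct substitution: solve \eqref{admm:eqxk} for $Ax_k+By_{k-1}-b$, substitute it, and combine $B y_{k-1}$ with $By_k$ to form $B\Delta y_k$; the coefficients telescope to match \eqref{aux.1}.

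Equation \eqref{aux.2} is the step that requires the most care, and this is the main (mild) obstacle. Writing the optimality condition of the $y$-subproblem \eqref{def:tyk-admm} gives
\begin{equation*}
0 \in \partial g(y_k) - B^*\gamma_{k-1} + \beta B^*\bigl[\alpha(Ax_k+By_{k-1}-b)+B\Delta y_k\bigr] + H_2\Delta y_k.
\end{equation*}
By \eqref{admm:eqxk} the bracketed term equals $-\Delta\gamma_k/\beta$, so this simplifies to $0 \in \partial g(y_k) - B^*\gamma_k + H_2\Delta y_k$. To re-express $-B^*\gamma_k$ as $-B^*\tilde\gamma_k$ plus controlled corrections, I subtract and add $B^*\tilde\gamma_k$, and evaluate $\tilde\gamma_k-\gamma_k = (\tilde\gamma_k-\gamma_{k-1})-\Delta\gamma_k$ using \eqref{aux.3}; the algebra produces exactly the coefficients $(H_2 + \tfrac{\beta}{\alpha}B^*B)$ on $\Delta y_k$ and $\tfrac{1-\alpha}{\alpha}B^*$ on $\Delta\gamma_k$, giving \eqref{aux.2}.

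Finally, to derive the consequence, I would simply write out $M(z_{k-1}-z_k)$ component-wise using \eqref{def:matrixM}: the three blocks read $-H_1\Delta x_k$, $-(H_2+\tfrac{\beta}{\alpha}B^*B)\Delta y_k-\tfrac{1-\alpha}{\alpha}B^*\Delta\gamma_k$, and $-\tfrac{1-\alpha}{\alpha}B\Delta y_k-\tfrac{1}{\alpha\beta}\Delta\gamma_k$. By \eqref{aux.0}, \eqref{aux.2}, and \eqref{aux.1} respectively, these belong to $\partial f(x_k)-A^*\tilde\gamma_k$, $\partial g(y_k)-B^*\tilde\gamma_k$, and equal $Ax_k+By_k-b$; hence $M(z_{k-1}-z_k)\in T(\tilde z_k)$, as required by \eqref{breg-subpro}.
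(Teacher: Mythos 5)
Your proposal is correct and follows essentially the same route as the paper: optimality conditions for the two subproblems, the identity \eqref{aux.3} to trade $\gamma_k$ for $\tilde\gamma_k$ in the $y$-inclusion, and a component-wise reading of $M(z_{k-1}-z_k)$ for the final inclusion. The only cosmetic difference is that you identify the bracketed term in the $y$-optimality condition as $-\Delta\gamma_k/\beta$ before collapsing it to $-B^*\gamma_k$, whereas the paper recognizes $\gamma_k$ directly; the algebra is the same.
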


\begin{proof}
It follows from  definitions of ${\gamma}_k$ and $\tilde{\gamma}_k$ in \eqref{admm:eqxk} and \eqref{xtilde}, respectively,  that
\begin{align*}
\frac{1}{\alpha}(\gamma_k-\gamma_{k-1})+\frac{\beta}{\alpha}B(y_k-y_{k-1})= -\beta(Ax_{k}+By_{k-1}-b)=\tilde{\gamma}_{k}-\gamma_{k-1},
\end{align*}
which, combined with definitions of   $\Delta y_k$ and $\Delta \gamma_k$  in \eqref{xtilde}, proves  \eqref{aux.3}.
From the optimality condition for  \eqref{def:tsk-admm}, we have
\begin{equation*}
0 \in \partial f(x_k)- A^*({\gamma}_{k-1}-\beta(Ax_{k}+By_{k-1}-b))+H_1(x_k-x_{k-1}),
\end{equation*}
which, combined with definitions of $\tilde{\gamma}_k$ and $\Delta x_k$ in \eqref{xtilde}, yields  \eqref{aux.0}.
Similarly, from the optimality condition for \eqref{def:tyk-admm} and  definitions of ${\gamma}_k$ and $\Delta y_k$ in \eqref{admm:eqxk} and  \eqref{aux.0}, respectively, we obtain
\begin{align}\nonumber
0 &\in   \partial g(y_k)-B^*\left[\gamma_{k-1}-\beta\alpha (A{x}_{k}+By_{k-1}-b)+\beta B(y_{k}-y_{k-1})\right]+H_2(y_k- y_{k-1}) \\[2mm]\label{in:ang:suby}
& = \partial g(y_k)-B^*\gamma_k+H_2\Delta y_k.
\end{align}
On the other hand, note that  \eqref{aux.3} implies  that     
$$
\gamma_k=\tilde{\gamma}_k+(\gamma_{k}-\gamma_{k-1})-(\tilde{\gamma}_k-\gamma_{k-1})=\tilde{\gamma}_k-\frac{(1-\alpha)}{\alpha}\Delta\gamma_{k}-\frac{\beta}{\alpha}B\Delta y_{k},
$$
which in turn, combined with \eqref{in:ang:suby}, gives \eqref{aux.2}.
The relation  \eqref{aux.1} follows immediately from \eqref{admm:eqxk}.

 Now, the last statement  of the lemma follows directly by \eqref{aux.0}--\eqref{aux.1} and definitions of $T$ and $M$ given in \eqref{FAB} and \eqref{def:matrixM}, respectively.
\end{proof}


 
\begin{lemma}\label{lem:deltak} The sequences $\{\Delta y_k\}$ and $\{\Delta \gamma_k\}$ defined in \eqref{xtilde} satisfy 
\begin{equation}\label{lem:Deltayk}
2\inner{B\Delta y_1}{\Delta \gamma_1}\geq \|\Delta y_1\|_{H_2}^2 - 4d_0, \qquad 2\langle B\Delta y_k,\Delta \gamma_k \rangle
\geq \|\Delta y_k\|_{H_2}^2-\|\Delta y_{k-1}\|_{H_2}^2 \quad \forall k\geq 2,
\end{equation} 
where $d_0$ is as in \eqref{def:d0admm}.
\end{lemma}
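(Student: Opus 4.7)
\bigskip

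\noindent\textbf{Proof plan.}
The plan is to deduce both inequalities from the monotonicity of $\partial g$, using in an essential way the identity \eqref{in:ang:suby} derived inside the proof of Lemma~\ref{pr:aux}. That identity says that for every $k\geq 1$ the vector
\[
w_k := B^*\gamma_k - H_2\Delta y_k
\]
belongs to $\partial g(y_k)$. So the two cases of the lemma will come from applying the monotone inequality $\langle w_k-\xi,\, y_k-z\rangle\geq 0$ for a well-chosen $(z,\xi)\in\mathrm{Gr}(\partial g)$.

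\medskip

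\noindent\emph{Case $k\geq 2$.}
I take $(z,\xi)=(y_{k-1},w_{k-1})$. Monotonicity of $\partial g$ gives $\langle w_k-w_{k-1},\Delta y_k\rangle\geq 0$. Plugging in the definition of $w_k,w_{k-1}$ and recognizing $B^*\Delta\gamma_k$, this collapses to
\[
\langle B\Delta y_k,\Delta\gamma_k\rangle \;\geq\; \|\Delta y_k\|_{H_2}^{2} - \langle H_2\Delta y_{k-1},\Delta y_k\rangle.
\]
Multiplying by $2$ and invoking \eqref{fact} with $Q=H_2$ on the remaining bilinear term yields the claimed bound. This step is entirely routine.

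\medskip

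\noindent\emph{Case $k=1$.}
Now there is no $w_0$; the substitute is any $z^*=(x^*,y^*,\gamma^*)\in T^{-1}(0)$, which by \eqref{FAB} provides $B^*\gamma^*\in\partial g(y^*)$. Applying monotonicity to $(y_1,w_1)$ and $(y^*,B^*\gamma^*)$ and unpacking $w_1$ yields
\[
\langle B(y_1-y^*),\gamma_1-\gamma^*\rangle \;\geq\; \langle H_2\Delta y_1,\,y_1-y^*\rangle.
\]
I would then use the polarization identity
\[
2\langle H_2(y_1-y_0),\,y_1-y^*\rangle = \|\Delta y_1\|_{H_2}^{2} + \|y_1-y^*\|_{H_2}^{2} - \|y^*-y_0\|_{H_2}^{2}
\]
and drop the non-negative middle term to obtain the intermediate bound
\[
2\langle B(y_1-y^*),\gamma_1-\gamma^*\rangle \;\geq\; \|\Delta y_1\|_{H_2}^{2} - \|y^*-y_0\|_{H_2}^{2}.
\]
The final step is to convert the left-hand side from the ``shifted'' inner product $2\langle B(y_1-y^*),\gamma_1-\gamma^*\rangle$ to the target $2\langle B\Delta y_1,\Delta\gamma_1\rangle$ by expanding $y_1-y^*=\Delta y_1-(y^*-y_0)$ and $\gamma_1-\gamma^*=\Delta\gamma_1-(\gamma^*-\gamma_0)$. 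The discrepancy is three bilinear cross terms each pairing an iterate quantity with a solution quantity; each of them is controlled by \eqref{fact} together with the pointwise inequality $\|y^*-y_0\|_{H_2}^{2}\leq \|z^*-z_0\|_{M}^{2}$, which is immediate from the block form of $M$ in \eqref{def:matrixM} (all off-diagonal terms collapse into a perfect square that is non-negative for $\alpha\in(0,2]$). Summing the bounds produces a multiple of $\|z^*-z_0\|_{M}^{2}$ on the right; taking the infimum over $z^*\in T^{-1}(0)$ converts this to $4d_0$ via \eqref{def:d0admm}.

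\medskip

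\noindent\emph{Main obstacle.}
The delicate part is the second half of the $k=1$ case: the cross terms $\langle B\Delta y_1,\gamma_0-\gamma^*\rangle$ and $\langle B(y_0-y^*),\Delta\gamma_1\rangle$ pair a sequence-dependent quantity with a solution-dependent one, so one cannot simply bound them by a constant. The way out is to calibrate the Young-type constants so that the ``bad'' quadratic residues on the iterate side are dominated by the structural positive terms appearing inside $\|z^*-z_0\|_{M}^{2}$ (specifically the $\tfrac{\beta}{\alpha}\|B\cdot\|^2$ and $\tfrac{1}{\alpha\beta}\|\cdot\|^2$ blocks of $M$, which are precisely those used in \eqref{def:matrixM}). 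The constant $4$ is the bookkeeping outcome of three such Young estimates plus the $\|y^*-y_0\|_{H_2}^{2}$ term, each contributing one copy of $\|z^*-z_0\|_{M}^{2}$ after infimization.
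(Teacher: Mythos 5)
Your treatment of the case $k\geq 2$ is exactly the paper's argument (monotonicity of $\partial g$ applied to the inclusions $B^*\gamma_j-H_2\Delta y_j\in\partial g(y_j)$ for $j=k-1,k$, then \eqref{fact} with $Q=H_2$) and is correct. The case $k=1$, however, has a genuine gap precisely at the point you flag as the ``main obstacle.'' After expanding $y_1-y^*=\Delta y_1+(y_0-y^*)$ and $\gamma_1-\gamma^*=\Delta\gamma_1+(\gamma_0-\gamma^*)$, the cross terms $2\langle B\Delta y_1,\gamma_0-\gamma^*\rangle$ and $2\langle B(y_0-y^*),\Delta\gamma_1\rangle$ leave, after any Young-type estimate, \emph{iterate-dependent} quadratic residues $\|B\Delta y_1\|^2$ and $\|\Delta\gamma_1\|^2$. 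These cannot be ``dominated by the structural positive terms appearing inside $\|z^*-z_0\|_M^2$,'' because that quantity is solution-dependent; nor can they be absorbed into the left-hand side, which is the very inner product you are trying to bound from below. Bounding them by a multiple of $d_0$ requires first knowing $\|z^*-z_1\|_M\leq\|z^*-z_0\|_M$, i.e.\ Fej\'er monotonicity of the \emph{first} G-ADMM step in the seminorm $\|\cdot\|_M$ --- and establishing this is the actual content of the paper's proof. It does not follow from monotonicity of $\partial g$ alone: the paper uses the full inclusion $M(z_0-z_1)\in T(\tilde z_1)$ together with the algebraic identity
\begin{equation*}
\|\tilde z_1-z_1\|_M^2-\|\tilde z_1-z_0\|_M^2=\frac{\alpha-2}{\alpha^2}\Bigl\|\sqrt{\beta}\,B\Delta y_1+\tfrac{1}{\sqrt{\beta}}\Delta\gamma_1\Bigr\|^2\leq 0 .
\end{equation*}

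A second, independent obstruction is the endpoint $\alpha=2$, for which the lemma must hold (it feeds the ergodic bound on $(0,2]$). The $(y,\gamma)$-block of $M$ is the form $\frac{1}{\alpha}\bigl[\beta\|Bv\|^2+2(1-\alpha)\langle Bv,w\rangle+\tfrac1\beta\|w\|^2\bigr]$, whose smallest eigenvalue is $(2-\alpha)/\alpha$; at $\alpha=2$ it degenerates, so individual norms such as $\|\gamma^*-\gamma_0\|^2$ are \emph{not} controlled by $\|z^*-z_0\|_M^2$, and your Young-inequality bookkeeping cannot yield a constant that is finite uniformly on $(0,2]$, let alone the stated $4$. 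The paper sidesteps both problems by never splitting $M$ into individual norms: it starts from $0\leq\frac1\alpha\|\sqrt\beta B\Delta y_1+\beta^{-1/2}\Delta\gamma_1\|^2$, adds $\|\Delta y_1\|_{H_2}^2-2\langle B\Delta y_1,\Delta\gamma_1\rangle$ to both sides to recognize $\|z_1-z_0\|_M^2$ as the exact combination on the right, and then applies \eqref{fact} and the first-step Fej\'er property to get $\|z_1-z_0\|_M^2\leq 4\|z^*-z_0\|_M^2$. You would need to import that Fej\'er step (or an equivalent bound on $\|z_1-z_0\|_M$ by $d_0$) to close your argument.
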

\begin{proof}
 Let  a point  $z^*:=(x^*,y^*,\gamma^*)$ be  such that $0\in T(x^*,y^*,\gamma^*)$ (see assumption {\bf A1}) and consider $z_i:=(x_i,y_i,\gamma_i)$, $i=0,1$.  First, note that 
$$
0\leq\frac{\beta}{\alpha}\|B\Delta y_1\|^2+\frac{2}{\alpha}\langle B\Delta y_1,\Delta\gamma_1 \rangle+  \frac{1}{\alpha\beta} \|\Delta \gamma_1\|^2,
$$  
where $\Delta y_1$ and $\Delta \gamma_1$ are as in \eqref{xtilde}.
Hence, by adding $\|\Delta y_1\|_{H_2}^2-2\inner{B\Delta y_1}{\Delta \gamma_1}$ to both sides of the above inequality, we obtain
\begin{align} 
\|\Delta y_1\|_{H_2}^2-2\inner{B\Delta y_1}{\Delta \gamma_1} &\leq \|\Delta y_1\|_{H_2}^2+
\frac{\beta}{\alpha}\|B\Delta y_1\|^2+2\frac{(1-\alpha)}{\alpha}\inner{B\Delta y_1}{\Delta \gamma_1}+
\frac{1}{\alpha\beta}\|\Delta \gamma_1\|^2 \nonumber\\
 &\leq \|z_1-z_0\|^{2}_{M}\leq 2\left(\|z^{*}-z_1\|^{2}_{M}+\|z^{*}-z_0\|^{2}_{M}\right),\label{eq_000000012}
\end{align}
where $M$ is as in \eqref{def:matrixM} and the last inequality is a consequence of \eqref{fact} with $Q=M$.
On the other hand, taking  $ \tilde{z}_1=(x_1,y_1,\tilde{\gamma}_1)$,
Lemma~\ref{pr:aux} implies that  $(z_0, z_1,\tilde{z}_1)$ satisfies  \eqref{breg-subpro}  with $T$ and $M$ as in \eqref{FAB} and \eqref{def:matrixM}, respectively; namely, $M(z_0-z_1) \in T(\tz_1)$. Hence, since  $0 \in T(z^*)$ and $T$ is monotone, we obtain  $\langle M(z_0-z_1), \tilde{z}_1 - z^*\rangle\geq 0$. Thus, it follows  that 
\begin{align}\nonumber
 \|z^{*}-z_{1}\|^{2}_{M}-\|z^{*}-z_{0}\|^{2}_{M} &= \|(z^{*}-\tilde{z}_1)+(\tilde{z}_1-z_1)\|^{2}_{M}-\|(z^{*}-\tilde{z}_1)+(\tilde{z}_1-z_{0})\|^{2}_{M}\\
 \nonumber
& =  \|\tilde{z}_1-z_{1}\|^{2}_{M}+2\langle M(z_{0}-z_1),z^{*}-\tilde{z}_1 \rangle-\|\tilde{z}_1-z_{0}\|^{2}_{M}\\ \label{ineq_lema007d1d0}
& \leq \|\tilde{z}_1-z_{1}\|^{2}_{M}-\|\tilde{z}_1-z_{0}\|^{2}_{M}.
\end{align}
Combining  \eqref{xtilde} and \eqref{aux.3}, we have $\tilde{\gamma}_1-\gamma_{1}=[(1-\alpha)\Delta\gamma_1+\beta B\Delta y_1]/\alpha$. Hence, using the definitions of $M$, $z_1$ and $\tilde z_1$, we obtain
\begin{align}\label{equ:lem:est1}
\|\tilde{z}_1-z_{1}\|^{2}_{M}=
\frac{1}{\alpha\beta}\|\tilde{\gamma}_1-\gamma_{1}\|^{2}= \frac{\beta}{\alpha^3}\|B\Delta y_1\|^2+2\frac{(1-\alpha)}{\alpha^3}\inner{B\Delta y_1}{\Delta\gamma_1}
+\frac{(1-\alpha)^2} {\alpha^3\beta}\left\|\Delta\gamma_1\right\|^2 \nonumber
\end{align}
and 
\begin{align*}
\|\tilde{z}_1-z_{0}\|_{M}^{2}&\geq \frac{\beta}{\alpha} 
\|B (y_1-y_0)\|^2+\frac{2(1-\alpha)}{\alpha}\inner{ B(y_1-y_0)}{ \tilde{\gamma}_1-\gamma_{0}}
+\frac{1}{\alpha\beta}\|\tilde{\gamma}_1 -\gamma_{0}\|^2\\ \nonumber
&= \left(\frac{\beta}{\alpha}+2\frac{(1-\alpha)\beta}{\alpha^2}+\frac{\beta}{\alpha^3} \right)\|B\Delta y_1\|^2+
2\left(\frac{(1-\alpha)}{\alpha^2}+\frac{1}{\alpha^3}\right)\inner{ B\Delta y_1}{ \Delta \gamma_1}+
\frac{1}{\alpha^3\beta}\left\|\Delta\gamma_1\right\|^2,
\end{align*}
where the last equality  is due to \eqref{xtilde} and \eqref{aux.3}.
Hence, it is easy to see that 
\begin{align}\nonumber
\|\tilde{z}_1-z_1\|^{2}_{M}-\|\tilde{z}_1-z_0\|^{2}_{M}&\leq 
\frac{(\alpha-2)}{\alpha^2}\left\|\sqrt{\beta} B\Delta y_1 
+\frac{1}{\sqrt{\beta}}\Delta\gamma_1\right\|^2\leq 0.
\end{align}
Thus, it follows from  \eqref{ineq_lema007d1d0} that
$$
\|z^*-z_1\|^{2}_{M}\leq \|z^*-z_0\|^{2}_{M},
$$
which, combined with \eqref{eq_000000012}, yields 
$$
\|\Delta y_1\|_{H_2}^2-2\inner{B\Delta y_1}{\Delta \gamma_1} \leq 4\|z^{*}-z_0\|^{2}_{M}.
$$
Therefore, the first inequality in \eqref{lem:Deltayk} follows from definition of $d_0$ (see  \eqref{def:d0admm}) and the fact that $z^*\in T^{-1}(0)$ is arbitrary.

Let us now prove the second inequality in \eqref{lem:Deltayk}.
First, from the optimality condition of  \eqref{def:tyk-admm} and  \eqref{admm:eqxk},  we obtain
$$
 B^{*}\gamma_{j}-H_{2}(y_{j}-y_{j-1})\in \partial g(y_j)\qquad \forall j\geq1.
$$
For every $k\geq 2$, using the previous inclusion for $j=k-1$ and $j=k$, it follows from the monotonicity of the subdifferential of $g$ that
\[
\left \langle B^*(\gamma_k-\gamma_{k-1})- H_{2}(y_k-y_{k-1})+ H_{2}(y_{k-1}-y_{k-2}),y_k-y_{k-1}\right\rangle\geq0,
\]
which, combined with \eqref{xtilde}, yields
\[
\langle B\Delta y_k,\Delta \gamma_k \rangle
\geq\|\Delta y_k\|_{H_2}^2- \langle H_{2}\Delta y_{k-1},\Delta y_k\rangle \qquad \forall k\geq 2.
\]
To conclude the proof, use the  relation \eqref{fact} with $Q=H_{2}$.
\end{proof}  
 
The following theorem  shows that the generalized ADMM is an instance of the modified HPE framework. 
Let us consider the following quantity:
\begin{equation}\label{def:sigmaalpha}
\sigma_{\alpha}= \frac{1}{1+\alpha(2-\alpha)}.
\end{equation} 

Note that $\sigma_{2}=1$, and  for any  $\alpha\in(0,2)$ we have $\sigma_{\alpha}\in(0,1)$.

\begin{theorem}\label{th:admm_hpe} Let 
$\{(x_k,y_k,\gamma_k)\}$  be generated by the generalized ADMM  and consider $\{(\Delta y_k,\tilde{\gamma}_k)\}$  and  $\sigma_{\alpha}$ as in \eqref{xtilde} and \eqref{def:sigmaalpha}, respectively.
Define 
\begin{equation}\label{def:zk}
z_{k-1}=(x_{k-1},y_{k-1},\gamma_{k-1})\qquad\tilde{z}_k=(x_k,y_k,\tilde{\gamma}_k), \qquad \forall\, k\geq 1,
\end{equation}
and 
\begin{equation}\label{eta}
\eta_0=\frac{4(2-\alpha)\sigma_{\alpha}}{\alpha}d_0,\qquad\eta_k=\frac{(2-\alpha)\sigma_{\alpha}}{\alpha}\|\Delta y_k\|_{H_2}^{2} \qquad\forall\,k\geq 1,
\end{equation}
where $d_0$ is as in \eqref{def:d0admm}. Then, the sequence $\{(z_k,\tilde{z}_k,\eta_k)\}$ is an instance of the modified HPE framework  applied for solving  \eqref{FAB}, where  $\sigma:=\sigma_\alpha$ and $M$ is as in~\eqref{def:matrixM}.
\end{theorem}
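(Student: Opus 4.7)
The plan is to verify the two defining conditions of the modified HPE framework for the sequence $\{(z_k,\tilde z_k,\eta_k)\}$ given in the statement. The inclusion \eqref{breg-subpro}, i.e.\ $M(z_{k-1}-z_k)\in T(\tilde z_k)$, is precisely the last assertion of Lemma~\ref{pr:aux}, so everything boils down to establishing the error estimate \eqref{breg-cond1} with $\sigma=\sigma_\alpha$.

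First I would expand both seminorms explicitly in the variables $\Delta x_k$, $\Delta y_k$, and $\Delta\gamma_k$. Since $\tilde z_k-z_k=(0,0,\tilde\gamma_k-\gamma_k)$, the block structure of $M$ reduces $\|\tilde z_k-z_k\|_M^2$ to $\frac{1}{\alpha\beta}\|\tilde\gamma_k-\gamma_k\|^2$, and the identity $\tilde\gamma_k-\gamma_k=\frac{1-\alpha}{\alpha}\Delta\gamma_k+\frac{\beta}{\alpha}B\Delta y_k$ (obtained by subtracting $\Delta\gamma_k$ from \eqref{aux.3}) turns this into an explicit quadratic form in $\Delta\gamma_k$ and $B\Delta y_k$. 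The other seminorm $\|\tilde z_k-z_{k-1}\|_M^2$ is handled by the same block decomposition together with $\tilde\gamma_k-\gamma_{k-1}=\frac{1}{\alpha}(\Delta\gamma_k+\beta B\Delta y_k)$ from \eqref{aux.3}, and it additionally carries the two nonnegative terms $\|\Delta x_k\|_{H_1}^2$ and $\|\Delta y_k\|_{H_2}^2$.

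The central algebraic check is that with $\sigma=\sigma_\alpha=1/(1+\alpha(2-\alpha))$ the combination $\sigma_\alpha\|\tilde z_k-z_{k-1}\|_M^2-\|\tilde z_k-z_k\|_M^2$ collapses to
\[
\sigma_\alpha\|\Delta x_k\|_{H_1}^2+\sigma_\alpha\|\Delta y_k\|_{H_2}^2+\frac{(2-\alpha)^2\sigma_\alpha}{\alpha\beta}\|\Delta\gamma_k\|^2+\frac{2(2-\alpha)\sigma_\alpha}{\alpha}\langle B\Delta y_k,\Delta\gamma_k\rangle.
\]
The choice of $\sigma_\alpha$ is exactly what kills the would-be coefficient of $\|B\Delta y_k\|^2$ (through identities such as $1-(1-\alpha)^2(1+2\alpha-\alpha^2)=\alpha^2(2-\alpha)^2$), and I expect this bookkeeping to be the main obstacle — conceptually straightforward but tedious.

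Once the identity above is in hand, the three squared terms are manifestly nonnegative and can simply be discarded, so the only remaining task is to absorb the cross term into $\eta_{k-1}-\eta_k$. The definition $\eta_k=\frac{(2-\alpha)\sigma_\alpha}{\alpha}\|\Delta y_k\|_{H_2}^2$ is calibrated precisely so that for $k\ge 2$ the second inequality of Lemma~\ref{lem:deltak} yields $\frac{2(2-\alpha)\sigma_\alpha}{\alpha}\langle B\Delta y_k,\Delta\gamma_k\rangle\ge \eta_k-\eta_{k-1}$; for $k=1$ the first inequality of Lemma~\ref{lem:deltak}, together with the choice $\eta_0=\frac{4(2-\alpha)\sigma_\alpha}{\alpha}d_0$, supplies the corresponding bound. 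Note that the degenerate case $\alpha=2$ is automatic, since both the cross-term coefficient and every $\eta_k$ vanish. Assembling these pieces delivers \eqref{breg-cond1} and completes the verification that the generalized ADMM is an instance of the modified HPE framework.
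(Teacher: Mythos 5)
Your proposal is correct and follows essentially the same route as the paper's proof: expand $\|\tilde z_k-z_k\|_M^2$ and $\|\tilde z_k-z_{k-1}\|_M^2$ via \eqref{aux.3}, observe that $\sigma_\alpha=1/(1+2\alpha-\alpha^2)$ annihilates the coefficient of $\|B\Delta y_k\|^2$ so that $\sigma_\alpha\|\tilde z_k-z_{k-1}\|_M^2-\|\tilde z_k-z_k\|_M^2$ reduces to the stated sum of nonnegative squares plus the cross term, and then absorb $\tfrac{2(2-\alpha)\sigma_\alpha}{\alpha}\langle B\Delta y_k,\Delta\gamma_k\rangle$ into $\eta_k-\eta_{k-1}$ via Lemma~\ref{lem:deltak} and the definitions in \eqref{eta}. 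The algebraic identities you cite (in particular $1-(1-\alpha)^2(1+2\alpha-\alpha^2)=\alpha^2(2-\alpha)^2$) check out, so no gap remains.
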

\begin{proof}
The inclusion \eqref{breg-subpro} follows from the last statement in Lemma~\ref{pr:aux}.
Let us now show that \eqref{breg-cond1} holds. Using 
\eqref{xtilde}, \eqref{aux.3} and \eqref{def:zk}, we obtain
\begin{align}
\|\tilde{z}_k -z_{k}\|_{M}^{2}&=\frac{1}{\alpha\beta}
\|\tilde{\gamma}_k-\gamma_k\|^2= \frac{1}{\alpha\beta}
\left\|\frac{1}{\alpha}\left[(1-\alpha){\Delta \gamma_k}+\beta B\Delta y_k\right]\right\|^2  \nonumber \\
 & = \frac{1}{\alpha^3\beta}\left[(1-\alpha)^2\|\Delta \gamma_k\|^2 +2(1-\alpha)\beta\inner { B\Delta y_k}{\Delta \gamma_k}+\beta^2\|B\Delta y_k\|^2\right] \label{aux.102}.
\end{align}    
Also, \eqref{xtilde}  and  \eqref{def:zk}  imply that
{\small \begin{equation}\label{ad90}
\|\tilde{z}_k-z_{k-1}\|_{M}^{2}=\|\Delta x_k\|_{H_1}^2+\|\Delta y_k\|_{H_2}^2+ \frac{\beta}{\alpha} 
\|B\Delta y_k\|^2+2\frac{(1-\alpha)}{\alpha}\inner{ B\Delta y_k}{ \tilde{\gamma}_k-\gamma_{k-1}}
+\frac{1}{\alpha\beta}\|\tilde{\gamma}_k -\gamma_{k-1}\|^2.
\end{equation}        }
It follows from \eqref{aux.3} that  
\begin{align*}
\frac{1}{\alpha\beta}\|\tilde{\gamma}_k-\gamma_{k-1}\|^2 &=
\frac{1}{\alpha^3\beta}\left[\|\Delta \gamma_k\|^{2}+2\beta\inner{B\Delta y_k}{\Delta\gamma_{k}}+\beta^2\|B\Delta y_k\|^2\right],\\
2\frac{(1-\alpha)}{\alpha}\inner{B\Delta y_k}{\tilde{\gamma}_k-\gamma_{k-1}}&=2\frac{(1-\alpha)}{\alpha^2}\left[\inner{ B\Delta y_k}{\Delta\gamma_k}+\beta\|B\Delta y_k\|^2\right]
\end{align*} 
which, combined with \eqref{ad90}, yields 
\begin{align}\nonumber
\|\tilde{z}_k-z_{k-1}\|_{M}^{2} = \|\Delta x_k\|_{H_1}^2&+\|\Delta y_k\|_{H_2}^2 +
\left(\frac{\beta}{\alpha}+2\frac{(1-\alpha)\beta}{\alpha^2}+\frac{\beta}{\alpha^3}\right)\|B\Delta y_k\|^2\\\label{aux.103}
&+2\left(\frac{(1-\alpha)}{\alpha^2}+\frac{1}{\alpha^3}\right)\inner{B\Delta y_k}{\Delta\gamma_k}
+\frac{1}{\alpha^3\beta}\|\Delta \gamma_k\|^2.
\end{align}
Therefore, combining \eqref{aux.102} and \eqref{aux.103}, it is easy to verify that
\begin{align*}
\sigma_{\alpha}\|\tilde{z}_k-z_{k-1}\|_{M}^{2}&-\|\tilde{z}_k-z_{k}\|_{M}^{2}\\
 &= \sigma_{\alpha}\|\Delta x_k\|_{H_1}^2+\sigma_{\alpha}\|\Delta y_k\|_{H_2}^2+
2\frac{(2-\alpha)\sigma_{\alpha}}{\alpha}\inner{B\Delta y_k}{\Delta\gamma_k}
+\frac{(2-\alpha)^{2}\sigma_{\alpha}}{\alpha\beta}\|\Delta\gamma_k\|^2\\
&\geq 2\frac{(2-\alpha)\sigma_{\alpha}}{\alpha}\inner{B\Delta y_k}{\Delta\gamma_k}\geq \eta_k - \eta_{k-1}\qquad \forall\,k\geq 1,
\end{align*}
where $\sigma_\alpha$ is as in \eqref{def:sigmaalpha}, 
and the last inequality is due to \eqref{lem:Deltayk} and \eqref{eta}.  Therefore, \eqref{breg-cond1} holds, and then 
we conclude that the sequence 
$\{(z_k,\tilde{z}_k,\eta_k)\}$ is an instance of the modified HPE framework.
\end{proof}

\subsection{Iteration-complexity bounds for the generalized ADMM}\label{PointErg}
In this subsection, we study pointwise and ergodic  iteration-complexity bounds for the generalized ADMM. We start by presenting a pointwise  bound
under the assumption that the relaxation parameter $\alpha$ belongs to $(0,2)$. Then, we consider an auxiliary result which is used to show that the sequence $\{\rho_k\}$, as defined in Theorem \ref{th:ergHPE} with $\{z_k\}$ and $\{\tilde{z}_k\}$ as in \eqref{def:zk}, is bounded even in the extreme case in which $\alpha=2$. This latter result is then used to present the ergodic bounds of the generalized ADMM for any $\alpha\in (0,2]$.

\begin{theorem} {\bf (Pointwise convergence   of the generalized ADMM)} \label{th:pointwise} 
Let  $\{(x_k,y_k,\gamma_k)\}$ be  generated by the generalized ADMM  with $\alpha\in (0,2)$  
and consider the sequence $\{(\Delta x_k,\Delta y_k, \Delta\gamma_k,\tilde{\gamma}_k)\}$ as in \eqref{xtilde}.
Then, for every $k\geq 1$, 
\begin{equation}\label{eq:th_incADMMtheta1} 
0\in M\left(
\begin{array}{c} 
\Delta x_k\\
\Delta y_k\\
\Delta\gamma_k
\end{array} 
\right) + 
\left( 
\begin{array}{c} 
\partial f(x_k)- A^*\tilde{\gamma}_k\\[1mm]  
\partial g(y_k)- B^*\tilde{\gamma}_k\\[1mm]  
Ax_k+By_k-b
\end{array} \right)
\end{equation}
and   there exists  $i\leq k$ such that
\[
\|(\Delta x_i,\Delta y_i,\Delta \gamma_i)\|_{M} \leq \frac{1}{\sqrt{k}} \sqrt{\frac{2[\alpha(1+\sigma_{\alpha})+8(2-\alpha)\sigma_{\alpha}]d_{0}}
{\alpha(1-\sigma_{\alpha})}},
\]
where  $M$, $d_0$,  and $\sigma_{\alpha}$ are as \eqref{def:matrixM}, \eqref{def:d0admm} and \eqref{def:sigmaalpha}, respectively.
\end{theorem}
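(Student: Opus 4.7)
\medskip

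\noindent\textbf{Proof proposal.} The plan is to invoke Theorem~\ref{th:admm_hpe} to recognize the G-ADMM iterates as an instance of the modified HPE framework, and then read off the conclusions from the pointwise HPE bound in Theorem~\ref{th:pointwiseHPE}. Since $\alpha \in (0,2)$, the quantity $\sigma_\alpha$ defined in \eqref{def:sigmaalpha} lies strictly in $(0,1)$, so the hypothesis $\sigma<1$ needed by Theorem~\ref{th:pointwiseHPE} is satisfied for $\sigma:=\sigma_\alpha$.

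First I would define $z_{k-1}=(x_{k-1},y_{k-1},\gamma_{k-1})$ and $\tilde z_k=(x_k,y_k,\tilde\gamma_k)$ as in \eqref{def:zk}, set $\eta_0$ and $\eta_k$ as in \eqref{eta}, and appeal to Theorem~\ref{th:admm_hpe} to conclude that $\{(z_k,\tilde z_k,\eta_k)\}$ satisfies \eqref{breg-subpro}--\eqref{breg-cond1} for the operator $T$ from \eqref{FAB} and the operator $M$ from \eqref{def:matrixM}, with $\sigma=\sigma_\alpha<1$. The inclusion \eqref{eq:th_incADMMtheta1} is then immediate from the first conclusion of Theorem~\ref{th:pointwiseHPE}, namely $0\in M(z_k-z_{k-1})+T(\tilde z_k)$: by definition $z_k-z_{k-1}=(\Delta x_k,\Delta y_k,\Delta\gamma_k)$, and evaluating $T$ at $\tilde z_k=(x_k,y_k,\tilde\gamma_k)$ yields exactly the three components displayed on the right-hand side of \eqref{eq:th_incADMMtheta1}.

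For the quantitative bound, the second conclusion of Theorem~\ref{th:pointwiseHPE} guarantees the existence of $i\le k$ with
\[
\|z_i-z_{i-1}\|_M \;\le\; \frac{1}{\sqrt{k}}\sqrt{\frac{2(1+\sigma_\alpha)d_0+4\eta_0}{1-\sigma_\alpha}}.
\]
Since $\|z_i-z_{i-1}\|_M=\|(\Delta x_i,\Delta y_i,\Delta\gamma_i)\|_M$, it remains only to substitute $\eta_0=\dfrac{4(2-\alpha)\sigma_\alpha}{\alpha}d_0$ from \eqref{eta} and simplify:
\[
2(1+\sigma_\alpha)d_0 + 4\eta_0 \;=\; 2(1+\sigma_\alpha)d_0 + \frac{16(2-\alpha)\sigma_\alpha}{\alpha}d_0 \;=\; \frac{2\bigl[\alpha(1+\sigma_\alpha)+8(2-\alpha)\sigma_\alpha\bigr]}{\alpha}\,d_0,
\]
which, after dividing by $1-\sigma_\alpha$, gives precisely the bound claimed in the theorem.

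There is essentially no obstacle here beyond bookkeeping, because Theorem~\ref{th:admm_hpe} has already absorbed all the technical work (establishing that the G-ADMM triples $(z_k,\tilde z_k,\eta_k)$ satisfy the HPE error criterion with the specific constant $\sigma_\alpha$). The only points requiring care are (i) verifying that $\sigma_\alpha<1$ when $\alpha\in(0,2)$ so Theorem~\ref{th:pointwiseHPE} applies (a quick algebraic check from \eqref{def:sigmaalpha}), and (ii) tracking signs in the HPE inclusion $M(z_{k-1}-z_k)\in T(\tilde z_k)$ so that it reads $0\in M(z_k-z_{k-1})+T(\tilde z_k)$ and lines up componentwise with \eqref{eq:th_incADMMtheta1}.
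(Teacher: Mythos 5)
Your proposal is correct and follows exactly the paper's own argument: combine Theorem~\ref{th:admm_hpe} (G-ADMM as an instance of the modified HPE framework with $\sigma=\sigma_\alpha<1$) with Theorem~\ref{th:pointwiseHPE}, then substitute $\eta_0=\tfrac{4(2-\alpha)\sigma_\alpha}{\alpha}d_0$ into the resulting bound. Your algebraic simplification of $2(1+\sigma_\alpha)d_0+4\eta_0$ is accurate and reproduces the stated constant.
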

\begin{proof}
Since $\sigma_{\alpha} \in (0,1)$ for any  $\alpha\in (0,2)$ (see \eqref{def:sigmaalpha}),
we obtain  by combining Theorems~\ref{th:pointwiseHPE} and \ref{th:admm_hpe}   that  \eqref{eq:th_incADMMtheta1}  
holds and there exists $i\leq k$ such that 
\[
\norm{(\Delta x_i,\Delta y_i,\Delta \gamma_i)}_{M}\leq
\frac{1}{\sqrt{k}} \sqrt{\frac{2(1+\sigma_{\alpha})d_0+4\eta_0}{1-\sigma_{\alpha}}}.
\]
Hence,  to conclude the proof use the definition of  $\eta_0$ given in  \eqref{eta}.  
\end{proof}

For a given tolerance $\varepsilon>0$, Theorem~ \ref{th:pointwise} implies  that  in   at most $\mathcal{O}(1/\varepsilon^2)$ iterations, the G-ADMM  obtains  an ``$\varepsilon$-approximate" solution $( x,  y,\gamma)$  and a residual $ v$ of \eqref{LagInclusion_Intro} satisfying
\[
Mv \in T(x,y,\gamma), \quad \| v\|_M\leq \varepsilon,
\]
where $T$ and $M$ are as \eqref{FAB} and \eqref{def:matrixM}, respectively.

Next we consider an auxiliary result which will be used to obtain ergodic   iteration-complexity bounds  for the generalized ADMM.
\begin{lemma}\label{lem:aux3}
Let $\{(x_k,y_k,\gamma_k)\}$ be generated by the generalized ADMM and consider 
$\{(\Delta x_k,\Delta y_k, \Delta\gamma_k,\tilde{\gamma}_k)\}$ as in \eqref{xtilde}. Then, the sequence $\{\rho_k\}$ given  in \eqref{def:rho} with $M$ and $\{(z_{k},\tz_k)\}$ as  in  \eqref{def:matrixM} and \eqref{def:zk}, respectively,  satisfies
\[
\rho_k  \leq\frac{4(1+2\alpha)[\alpha+4(2-\alpha)\sigma_{\alpha}]d_{0}}{\alpha^3}\qquad \forall\, k\geq1,
\]
where $d_0$ is as in  \eqref{def:d0admm}.
\end{lemma}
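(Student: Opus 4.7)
The plan combines two ingredients: a uniform bound on $\|z_k-z_{k-1}\|_M^2$ obtained from the HPE framework, and a sharp comparison between $\|\tilde z_k-z_{k-1}\|_M^2$ and $\|z_k-z_{k-1}\|_M^2$ obtained from direct algebra. Since Theorem~\ref{th:admm_hpe} identifies the G-ADMM iterates as an instance of the modified HPE framework with $\sigma=\sigma_\alpha\leq 1$, combining the inclusion $M(z_{k-1}-z_k)\in T(\tilde z_k)$ and the fact that $0\in T(z^*)$ with monotonicity of $T$ (the standard identity used in \eqref{ineq_lema007d1d0}), together with the error bound \eqref{breg-cond1}, yields the Fej\'er-type descent
\[
\|z_k-z^*\|_M^2+\eta_k\leq\|z_{k-1}-z^*\|_M^2+\eta_{k-1}\qquad\forall\,z^*\in T^{-1}(0).
\]
Iterating, using the definition of $d_0$ in \eqref{def:d0admm}, and the triangle inequality produces $\|z_k-z_{k-1}\|_M^2\leq 4(d_0+\eta_0)$.

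Next I will establish the key comparison $\|\tilde z_k-z_{k-1}\|_M^2\leq\frac{1+2\alpha}{\alpha^2}\|z_k-z_{k-1}\|_M^2$. Starting from \eqref{ad90} and using \eqref{aux.3} to eliminate $\tilde\gamma_k-\gamma_{k-1}$, both norms become quadratic forms in $(B\Delta y_k,\Delta\gamma_k)$. After the $\beta$-eliminating scaling $p=\sqrt\beta\,B\Delta y_k$, $q=\Delta\gamma_k/\sqrt\beta$ and with $A:=\|\Delta x_k\|_{H_1}^2+\|\Delta y_k\|_{H_2}^2$, completing the square gives the parallel expressions
\[
\|z_k-z_{k-1}\|_M^2=A+\tfrac{1}{\alpha}\|p+(1-\alpha)q\|^2+(2-\alpha)\|q\|^2,
\]
\[
\|\tilde z_k-z_{k-1}\|_M^2=A+\tfrac{1}{\alpha^3}\|p+(1-\alpha)q\|^2+\tfrac{2-\alpha}{\alpha^2}\|p+q\|^2.
\]
A direct expansion (collecting the $\|p\|^2$, $\langle p,q\rangle$, and $\|q\|^2$ terms) reduces the difference to
\[
\tfrac{1+2\alpha}{\alpha^2}\|z_k-z_{k-1}\|_M^2-\|\tilde z_k-z_{k-1}\|_M^2=\tfrac{1+2\alpha-\alpha^2}{\alpha^2}\,A+\tfrac{1}{\alpha^2}\bigl[\alpha\|p-q\|^2+(2-\alpha)\|q\|^2\bigr],
\]
which is non-negative throughout $\alpha\in(0,2]$ since $1+2\alpha-\alpha^2\geq 0$ and $2-\alpha\geq 0$ there.

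Combining the two ingredients yields $\rho_k\leq 4(1+2\alpha)(d_0+\eta_0)/\alpha^2$ for every $k\geq 1$, and substituting $\eta_0=4(2-\alpha)\sigma_\alpha d_0/\alpha$ from \eqref{eta} rewrites this as the claimed $4(1+2\alpha)[\alpha+4(2-\alpha)\sigma_\alpha]d_0/\alpha^3$. The main obstacle is the second step: the naive triangle-inequality argument combining $\|\tilde z_k-z_{k-1}\|_M\leq\|\tilde z_k-z_k\|_M+\|z_k-z_{k-1}\|_M$ with the easy estimate $\|\tilde z_k-z_k\|_M\leq\|z_k-z_{k-1}\|_M/\alpha$ (readily extracted from \eqref{aux.102} by comparing with the expression above for $\|z_k-z_{k-1}\|_M^2$) only yields the looser factor $(1+\alpha)^2/\alpha^2$, which does not give a bound independent of $1/(2-\alpha)$ near $\alpha=2$. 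Sharpening to $(1+2\alpha)/\alpha^2$ must be done by directly comparing the two quadratic forms, and the symmetric scaling $(p,q)=(\sqrt\beta\,B\Delta y_k,\Delta\gamma_k/\sqrt\beta)$ is precisely what makes the relevant cancellations transparent.
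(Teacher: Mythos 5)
Your proof is correct and follows essentially the same route as the paper: both establish the key comparison $\|\tilde z_k-z_{k-1}\|_M^2\leq\frac{1+2\alpha}{\alpha^2}\|z_k-z_{k-1}\|_M^2$ and then combine it with $\|z_k-z_{k-1}\|_M^2\leq 2(\|z^*-z_k\|_M^2+\|z^*-z_{k-1}\|_M^2)$ and the Fej\'er-type monotonicity of Lemma~\ref{lema_desigualdades}(b) to reach the stated constant. The only difference is cosmetic: the paper verifies the comparison inequality through a chain of estimates on the cross terms using \eqref{fact}, whereas you obtain it from an exact sum-of-squares identity in $(p,q)=(\sqrt\beta\,B\Delta y_k,\Delta\gamma_k/\sqrt\beta)$, which I checked and which makes the non-negativity of the difference transparent for all $\alpha\in(0,2]$.
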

\begin{proof} 
The same argument used to prove \eqref{ad90} and \eqref{aux.103} yields, for every $k\geq 1$, 
\begin{equation}\label{normzz0}
\|\tilde{z}_k-z_{k-1}\|_{M}^{2}=\|\Delta x_k\|_{H_1}^2+\|\Delta y_k\|_{H_2}^2+\xi_k,
\end{equation}
where   {\small
\begin{equation*}
\xi_k:=\frac{\beta}{\alpha^3}\|B\Delta y_k\|^2+ \frac{2(1-\alpha)}{\alpha^3}\inner{B\Delta y_k}{\Delta\gamma_k}  + \frac{1}{\alpha^3\beta}\|\Delta \gamma_k\|^2\ +\frac{(2-\alpha)}{\alpha}\left[\frac{\beta}{\alpha}\|B\Delta y_k\|^2+\frac{2}{\alpha}\inner{B\Delta y_k}{\Delta\gamma_k}\right].
\end{equation*}}
Using the definitions of $M$ and $z_{k}$ given in  \eqref{def:matrixM} and \eqref{def:zk}, respectively, it follow that 
\begin{align}\nonumber
\xi_k&\leq\frac{1}{\alpha^2}\|z_k-z_{k-1}\|_{M}^{2}
+\frac{(2-\alpha)}{\alpha}\left[\frac{\beta}{\alpha}\|B\Delta y_k\|^2+\frac{2}{\alpha}\inner{B\Delta y_k}{\Delta\gamma_k}\right]\\\nonumber
&=\small \frac{1}{\alpha^2}\|z_k-z_{k-1}\|_{M}^{2}+
\frac{(2-\alpha)}{\alpha}\left[\frac{\beta}{\alpha}\|B\Delta y_k\|^2+\frac{2(1-\alpha)}{\alpha}\inner{B\Delta y_k}{\Delta\gamma_k}\right]+
\frac{2(2-\alpha)}{\alpha}\inner{B\Delta y_k}{\Delta\gamma_k}\\\nonumber
&\leq\frac{1}{\alpha^2}\|z_k-z_{k-1}\|_{M}^{2}+ \frac{(2-\alpha)}{\alpha}\|z_k-z_{k-1}\|_{M}^{2}+
\frac{2(1-\alpha)}{\alpha}\inner{B\Delta y_k}{\Delta\gamma_k}+\frac{2}{\alpha}\inner{B\Delta y_k}{\Delta\gamma_k}\\\label{aux:c(alpha)}
&\leq \frac{1+2\alpha-\alpha^2}{\alpha^2}\|z_k-z_{k-1}\|_{M}^{2}+
\frac{2(1-\alpha)}{\alpha}\inner{B\Delta y_k}{\Delta\gamma_k}+\frac{\beta}{\alpha}\norm{B\Delta y_k}^{2}+\frac{1}{\alpha\beta}\norm{\Delta\gamma_k}^{2},
\end{align}
where in the last two inequalities we used the fact that $\alpha\in (0,2]$ and  \eqref{fact} with $Q=I_{m}$, respectively. Combining \eqref{normzz0}, \eqref{aux:c(alpha)} and definitions of $M$ and $z_{k}$, we obtain, for every $k\geq 1$, 
\[
\|\tilde{z}_k-z_{k-1}\|_{M}^{2}\leq \frac{1+2\alpha-\alpha^2}{\alpha^2}\|z_k-z_{k-1}\|_{M}^{2}+\|z_k-z_{k-1}\|_{M}^{2}=
\frac{1+2\alpha}{\alpha^2}\|z_k-z_{k-1}\|_{M}^{2}.
\]                                                                                  
Now, letting  $z^*:=(x^*,y^*,\gamma^*)$ be an arbitrary solution  of  \eqref{FAB}, we obtain  from the last inequality and \eqref{fact} with $Q=M$ that
$$
 \|\tilde{z}_k-z_{k-1}\|_{M}^{2}\leq\frac{2(1+2\alpha)}{\alpha^2}\left[\|{z}^*-z_{k}\|_{M}^{2}+\|{z}^*-z_{k-1}\|_{M}^{2}\right] \qquad \forall k\geq 1.
$$
Since  the generalized ADMM is an instance of the modified HPE framework with $\sigma:=\sigma_{\alpha}$ (see Theorem~\ref{th:admm_hpe} and \eqref{def:sigmaalpha}),  it  follows from the last inequality and  Lemma~\ref{lema_desigualdades}(b)  that 
\[ 
\|\tilde{z}_k-z_{k-1}\|_{M}^{2}\leq \frac{4(1+2\alpha)}{\alpha^2}\left[ \|{z}^*-z_{0}\|_{M}^{2}+\eta_{0}\right] \qquad \forall k\geq 1.
\]
Since $z^*$ is an arbitrary solution  of  \eqref{FAB},  the result follows from the definition of $\rho_{k}$, $d_0$, and $\eta_{0}$ given  in \eqref{def:rho}, \eqref{def:d0admm} and \eqref{eta}, respectively.
\end{proof}
Next  result  presents  $\mathcal{O}(1/k)$ convergence rate for the  ergodic sequence associated to the generalized ADMM.

\begin{theorem} {\bf (Ergodic convergence of the generalized ADMM)}\label{th:ergodicproximal ADMM}
Let $\{(x_k,y_k,\gamma_k)\}$ be the sequence  generated by the generalized ADMM 
and consider $\{(\Delta x_k,\Delta y_k, \Delta\gamma_k,\tilde{\gamma}_k)\}$ as in \eqref{xtilde}.
Define the ergodic sequences as 
\begin{align}\label{eq:jase12}
(x_k^a,y_k^a,\gamma_k^a,\tilde{\gamma}^a_k)&=\frac{1}{k}\sum_{i=1}^k\left( x_i, y_i, \gamma_i,\tilde{\gamma}_i\right), \qquad
(r_{k,x}^{a},r_{k,y}^{a},r_{k,\gamma}^{a})=\frac{1}{k}\sum_{i=1}^k(\Delta x_i,\Delta y_i, \Delta\gamma_i),\\\label{eq:jase1212}
\varepsilon^a_{k,x}&= 
\frac{1}{k}\sum_{i=1}^k\inner{H_{1}\Delta x_{i}-A^{*}\tilde{\gamma}_i}{x_k^a-x_i},\\\label{eq:jase121212}
\varepsilon^a_{k,y}&=\frac{1}{k}\sum_{i=1}^k
\Inner{\left(H_{2}+\frac{\beta}{\alpha}B^*B\right)\Delta y_{i}+\frac{(1-\alpha)}{\alpha}B^*\Delta\gamma_{i}-B^*\tilde{\gamma}_i}{y_k^a-y_i}.
\end{align}
Then, for every $k\geq 1$, there hold $\varepsilon^a_{k,x}\geq 0, \;\varepsilon^a_{k,y}\geq 0$, and 

\begin{equation}\label{eq:th_incADMMtheta<1} 
 0\in M\left( 
\begin{array}{c} 
r_{k,x}^{a}\\[1mm]  
r_{k,y}^{a}\\[1mm]  
r_{k,\gamma}^{a}
\end{array} 
\right) + 
\left( 
\begin{array}{c} 
\partial f_{\varepsilon^a_{k,x}}(x_k^a)- A^*\tilde{\gamma}_k^a\\[1mm]  
\partial g_{\varepsilon^a_{k,y}}(y_k^a)- B^*\tilde{\gamma}_k^a\\[1mm]  
Ax_k^a+By_k^a-b
\end{array} \right),
\end{equation}  
\\[2mm]
\begin{equation}\label{rkaeka}
\|(r^a_{k,x}, r^a_{k,y}, r^a_{k,\gamma})\|_{M} \leq \frac{2\sqrt{c_\alpha d_0}}{k}, \quad
\varepsilon^a_{k,x} + \varepsilon^a_{k,y} \leq \frac{\tilde c_\alpha d_{0}}{k},
\end{equation}
where 
\begin{equation}\label{def:c_alpha}
 c_\alpha:=\frac{\alpha+4(2-\alpha)\sigma_{\alpha}}{\alpha},\quad \tilde c_\alpha:=\frac{3[3\alpha^{2}+4(1+2\alpha)\sigma_{\alpha}][\alpha+
4(2-\alpha)\sigma_{\alpha}]}{2\alpha^{3}},
\end{equation}
and  $M$, $d_0$, and $\sigma_{\alpha}$   
are as in \eqref{def:matrixM}, \eqref{def:d0admm}, and \eqref{def:sigmaalpha}, respectively.
\end{theorem}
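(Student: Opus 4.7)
The plan is to leverage Theorem~\ref{th:admm_hpe}, which identifies the generalized ADMM as an instance of the modified HPE framework with $z_k = (x_k, y_k, \gamma_k)$, $\tilde{z}_k = (x_k, y_k, \tilde{\gamma}_k)$, $\sigma = \sigma_\alpha$, $M$ as in \eqref{def:matrixM}, and $\eta_0 = \tfrac{4(2-\alpha)\sigma_\alpha}{\alpha} d_0$, and to then apply the abstract ergodic HPE bounds of Theorem~\ref{th:ergHPE}. Under these identifications, the HPE ergodic sequences in \eqref{SeqErg} coincide with $\tilde{z}_k^a = (x_k^a, y_k^a, \tilde{\gamma}_k^a)$ and $r_k^a = (r_{k,x}^a, r_{k,y}^a, r_{k,\gamma}^a)$, and produce an ergodic scalar error $\varepsilon_k^a$. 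Since $d_0 + \eta_0 = \tfrac{\alpha + 4(2-\alpha)\sigma_\alpha}{\alpha} d_0 = c_\alpha d_0$, the bound $\|r_k^a\|_M \leq 2\sqrt{c_\alpha d_0}/k$ follows at once from Theorem~\ref{th:ergHPE}.

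To establish the inclusion \eqref{eq:th_incADMMtheta<1}, I apply the transportation formula (Theorem~\ref{for:trans}(b)) block-by-block. From \eqref{aux.0} of Lemma~\ref{pr:aux}, the vectors $v_i^f := -H_1 \Delta x_i + A^* \tilde{\gamma}_i$ lie in $\partial f(x_i)$, so Theorem~\ref{for:trans}(b) yields both $\varepsilon_{k,x}^a \geq 0$ and $-H_1 r_{k,x}^a + A^* \tilde{\gamma}_k^a \in \partial_{\varepsilon_{k,x}^a} f(x_k^a)$. The same argument applied to \eqref{aux.2} gives $\varepsilon_{k,y}^a \geq 0$ together with the $\partial_{\varepsilon_{k,y}^a} g(y_k^a)$ membership. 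The third block \eqref{aux.1} is affine in $(x,y,\gamma)$, so averaging directly reproduces the third row. Collecting the three pieces yields \eqref{eq:th_incADMMtheta<1}.

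The delicate point is the bound on $\varepsilon_{k,x}^a + \varepsilon_{k,y}^a$, since Theorem~\ref{th:ergHPE} only controls the single scalar $\varepsilon_k^a$. The key algebraic observation is that these two quantities are in fact equal. Expanding $\varepsilon_k^a = \tfrac{1}{k}\sum_i \langle M(z_i - z_{i-1}), \tilde{z}_k^a - \tilde{z}_i\rangle$ through the block structure of $M$ and subtracting $\varepsilon_{k,x}^a + \varepsilon_{k,y}^a$, the discrepancy is, after using \eqref{aux.1} to replace $\tfrac{1-\alpha}{\alpha} B \Delta y_i + \tfrac{1}{\alpha\beta} \Delta \gamma_i$ by $-(Ax_i + By_i - b)$, a combination of the two sums $\tfrac{1}{k}\sum_i \langle Ax_i + By_i - b,\, \tilde{\gamma}_k^a - \tilde{\gamma}_i\rangle$ and $-\tfrac{1}{k}\sum_i \langle \tilde{\gamma}_i,\, (Ax_k^a + By_k^a) - (Ax_i + By_i)\rangle$; expanding the inner products and using $\tfrac{1}{k}\sum_i (Ax_i + By_i - b) = Ax_k^a + By_k^a - b$ and $\tfrac{1}{k}\sum_i \tilde{\gamma}_i = \tilde{\gamma}_k^a$ shows that the cross-terms telescope to zero, hence $\varepsilon_{k,x}^a + \varepsilon_{k,y}^a = \varepsilon_k^a$.

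With this identity in hand, the $\varepsilon$-bound becomes routine: inserting the uniform bound $\rho_k \leq \tfrac{4(1+2\alpha) c_\alpha d_0}{\alpha^2}$ supplied by Lemma~\ref{lem:aux3} together with $d_0 + \eta_0 = c_\alpha d_0$ into the HPE estimate $\varepsilon_k^a \leq \tfrac{3[3(d_0 + \eta_0) + \sigma_\alpha \rho_k]}{2k}$ yields $\varepsilon_k^a \leq \tfrac{3[3\alpha^2 + 4(1+2\alpha)\sigma_\alpha]}{2\alpha^2} \cdot \tfrac{c_\alpha d_0}{k}$, which after unfolding $c_\alpha$ is exactly $\tilde{c}_\alpha d_0/k$. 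Combined with the previous paragraph this closes the proof. The main obstacle is the telescoping identity $\varepsilon_{k,x}^a + \varepsilon_{k,y}^a = \varepsilon_k^a$; everything else is a direct transcription of Theorems~\ref{for:trans} and \ref{th:ergHPE} into the ADMM setting.
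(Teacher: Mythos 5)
Your proposal is correct and follows essentially the same route as the paper: identify the G-ADMM iterates as a modified HPE instance via Theorem~\ref{th:admm_hpe}, invoke Theorem~\ref{th:ergHPE} with $d_0+\eta_0=c_\alpha d_0$ and the bound on $\rho_k$ from Lemma~\ref{lem:aux3}, and reduce everything to the identity $\varepsilon^a_{k,x}+\varepsilon^a_{k,y}=\varepsilon^a_k$, which both you and the paper establish by the same telescoping computation based on \eqref{aux.1} and the averaging identities. Your only (welcome) addition is to make explicit, via the blockwise application of the transportation formula of Theorem~\ref{for:trans}(b) to \eqref{aux.0}--\eqref{aux.1}, the derivation of the inclusion \eqref{eq:th_incADMMtheta<1} and of $\varepsilon^a_{k,x},\varepsilon^a_{k,y}\ge 0$, a step the paper leaves implicit.
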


\begin{proof}
It follows from  Theorem \ref{th:admm_hpe} that the generalized ADMM is an instance of the modified HPE where $\{(z_{k},\tilde z_k)\}$ is given by \eqref{def:zk}. Moreover, it is easy to see that the quantities $r_k^a$ and  $\varepsilon_k^a$ given in \eqref{SeqErg} satisfy
\begin{equation}\label{eq:hjh}
r_k ^a=(r^a_{k,x}, r^a_{k,y}, r^a_{k,\gamma}),  \quad\varepsilon_k^a=\frac{1}{k}\sum_{i=1}^k
\left[\Inner{M\left(
\begin{array}{c} 
\Delta x_i\\ 
\Delta y_i\\ 
\Delta\gamma_i
\end{array}
\right)}{\left(x^{a}_{k}-x_i,y^{a}_{k}-y_i,\tilde{\gamma}^{a}_{k}-\tilde{\gamma}_i\right)}\right].  
\end{equation}
Hence,  from Theorems~\ref{th:ergHPE} and definition of $\eta_0$  in \eqref{eta}, we have 
\begin{equation}\label{eq:al90}
\|r^a_k\|_{M} \leq \frac{2\sqrt{(\alpha+4(2-\alpha)\sigma_{\alpha})d_0}}{k\sqrt{\alpha}}, \quad 
\varepsilon_k^a
\leq \frac{3[3\alpha^{2}+4(1+2\alpha)\sigma_{\alpha}][\alpha+
4(2-\alpha)\sigma_{\alpha}]d_{0}}{2\alpha^{3} k},
\end{equation} 
where in the last inequality we also used Lemma~\ref{lem:aux3}.
Now, we claim that $\varepsilon_k^a=\varepsilon^a_{k,x}+\varepsilon^a_{k,y}$. Using this claim, \eqref{rkaeka} follows immediately from  \eqref{def:c_alpha} and  \eqref{eq:al90}. Hence, to conclude the proof of the theorem,  it just  remains to prove the above claim. To this end, note that   \eqref{eq:jase1212} and \eqref{eq:jase121212} yield
\begin{align}\nonumber
\varepsilon^a_{k,x}+\varepsilon^a_{k,y}&=\frac{1}{k}\sum_{i=1}^{k}\left[\Inner{H_{1}\Delta x_i}{x^{a}_{k}-x_i}+
\Inner{\left(H_{2}+\frac{\beta}{\alpha}B^*B\right)\Delta y_{i}+\frac{\left(1-\alpha\right)}{\alpha}B^*\Delta\gamma_{i}}{y^{a}_{k}-y_i}\right]\\\label{eq:somaepsilon}
&+\frac{1}{k}\sum_{i=1}^k\left\inner{A\left(x^{a}_{k}-x_i\right)+B\left(y^{a}_{k}-y_i\right)}{-\tilde{\gamma}_i}\right].
\end{align}
On the other hand, from \eqref{eq:jase12}, we obtain 
\begin{align*}
\frac{1}{k}\sum_{i=1}^k\Inner{A(x^{a}_{k}-x_i)+B(y^{a}_{k}-y_i)}{-\tilde{\gamma}_i}&=
\frac{1}{k}\sum_{i=1}^k\Inner{Ax^{a}_{k}+By^{a}_{k}-b-(Ax_i+By_i-b)}{\tilde{\gamma}_{k}^{a}-\tilde{\gamma}_i}\\
&=\frac{1}{k}\sum_{i=1}^k\Inner{-(Ax_i+By_i-b)}{\tilde{\gamma}_{k}^{a}-\tilde{\gamma}_i}\\
&=\frac{1}{k}\sum_{i=1}^k\Inner{\frac{(1-\alpha)}{\alpha}B\Delta y_{i}+\frac{1}{\alpha\beta}\Delta\gamma_{i}}{\tilde{\gamma}_{k}^{a}-\tilde{\gamma}_i}
\end{align*}
where the last equality is due to \eqref{aux.1}. Hence, the claim follows  by combining \eqref{eq:somaepsilon},
 and the definitions of $M$ and  $\varepsilon_k^a$  in  \eqref{def:matrixM} and  \eqref{eq:hjh}, respectively.
\end{proof}

For a given tolerance $\varepsilon>0$, Theorem~\ref{th:ergodicproximal ADMM}  implies  that in   at most $\mathcal{O}(1/\varepsilon)$ iterations of the G-ADMM, we obtain  an ``$\varepsilon$-approximate" solution $(\hat x, \hat y, \hat\gamma)$  and a residual $\hat v=(\hat v_1,\hat v_2,\hat v_3)$ of \eqref{LagInclusion_Intro} satisfying
\[
\hat v_1 \in  \partial_{\varepsilon_1} f(\hat x)- A^{*}\hat \gamma,\quad  \hat v_2 \in \partial_{\varepsilon_2} g(\hat y)- B^{*}\hat \gamma, \quad  \hat v_3= A\hat x+B\hat y-b,\quad \|\hat v\|^*_{M}\leq\varepsilon, \quad \varepsilon_1+\varepsilon_2\leq\varepsilon,
\]
where $\|\cdot\|^*_M$ is a dual norm (seminorm) associated to $M$.


\appendix
\section{\bf Appendix (Proofs of Theorems~\ref{th:pointwiseHPE} and \ref{th:ergHPE})} \label{sec:hpe_Analysis}
 
The main goal in this section is to present the proofs of Theorems~\ref{th:pointwiseHPE} and \ref{th:ergHPE}.
Toward this goal, we first  consider a technical lemma.

\begin{lemma}\label{lema_desigualdades}
Let  $\{(z_k,\tilde z_k,\eta_k)\}$ be the sequence generated by the modified HPE framework.
For every $k \geq 1$, the following statements hold:
\begin{itemize}
\item[(a)]
 for every $z \in \Z$, we have
\[
\|z-z_{k}\|^{2}_{M}+\eta_{k}\leq (\sigma-1)\|\tilde{z}_k-z_{k-1}\|^{2}_{M}+\|z-z_{k-1}\|^{2}_{M}+2\langle M(z_{k-1}-z_k),z-\tilde{z}_k\rangle+\eta_{k-1};
\]
\item[(b)] for every  $z^* \in T^{-1}(0)$, we have
\begin{align*}
\|z^*-z_{k}\|^{2}_{M} +\eta_{k} &\leq  (\sigma-1)\|\tilde{z}_k-z_{k-1}\|^{2}_{M}+\|z^*-z_{k-1}\|^{2}_{M}+\eta_{k-1}\leq \|z^*-z_{k-1}\|^{2}_{M} +\eta_{k-1}.
\end{align*}
\end{itemize}
\end{lemma}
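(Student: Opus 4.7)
The plan is to derive part (a) from a standard three-point expansion for the $M$-seminorm combined with the error condition \eqref{breg-cond1}, and then to obtain part (b) by specializing $z$ to a solution of the inclusion and invoking monotonicity of $T$ together with \eqref{breg-subpro}.

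For (a), I would start from the bilinear expansion
\[
\|z - z_k\|_M^2 = \|z - \tz_k\|_M^2 + 2\langle M(z - \tz_k),\, \tz_k - z_k\rangle + \|\tz_k - z_k\|_M^2,
\]
together with the analogous identity in which $z_k$ is replaced by $z_{k-1}$. Subtracting these two identities cancels the common term $\|z - \tz_k\|_M^2$ and, after regrouping the inner-product term using the bilinearity recalled around \eqref{fact}, yields
\[
\|z - z_k\|_M^2 - \|z - z_{k-1}\|_M^2 = 2\langle M(z_{k-1} - z_k),\, z - \tz_k\rangle + \|\tz_k - z_k\|_M^2 - \|\tz_k - z_{k-1}\|_M^2.
\]
Adding $\eta_k$ to both sides and then invoking \eqref{breg-cond1}, which gives $\|\tz_k - z_k\|_M^2 + \eta_k \leq \sigma\|\tz_k - z_{k-1}\|_M^2 + \eta_{k-1}$, produces exactly the inequality asserted in (a).

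For (b), I would specialize (a) to $z = z^* \in T^{-1}(0)$. By \eqref{breg-subpro} we have $M(z_{k-1} - z_k) \in T(\tz_k)$, and since $0 \in T(z^*)$, monotonicity of $T$ yields $\langle M(z_{k-1} - z_k),\, \tz_k - z^*\rangle \geq 0$; equivalently, the inner-product term appearing in (a) is nonpositive. Dropping it gives the first inequality of (b). The second inequality is then immediate from $\sigma \leq 1$, which makes the coefficient $(\sigma - 1)$ multiplying the nonnegative quantity $\|\tz_k - z_{k-1}\|_M^2$ nonpositive.

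There is essentially no substantive obstacle here: the argument is mechanical, consisting of one three-point expansion followed by an application of the HPE error bound and of monotonicity. The only point that needs a little care is that $M$ is only positive semidefinite, so $\|\cdot\|_M$ is merely a seminorm; however, $\langle M\cdot,\cdot\rangle$ is still a symmetric bilinear form, which is all that the expansion above uses, so the proof goes through unchanged in the seminorm setting.
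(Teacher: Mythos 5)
Your proof is correct and follows essentially the same route as the paper: the identical three-point expansion of $\|z-z_k\|_M^2-\|z-z_{k-1}\|_M^2$ combined with \eqref{breg-cond1} for part (a), then monotonicity of $T$ via \eqref{breg-subpro} and $\sigma\leq 1$ for part (b). No gaps.
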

\begin{proof} 
(a) Note that, for every $z\in\Z$, 
\begin{align*}
 \|z-z_{k}\|^{2}_{M}-\|z-z_{k-1}\|^{2}_{M} &= \|(z-\tilde{z}_k)+(\tilde{z}_k-z_k)\|^{2}_{M}-\|(z-\tilde{z}_k)+(\tilde{z}_k-z_{k-1})\|^{2}_{M}\\
& =  \|\tilde{z}_k-z_{k}\|^{2}_{M}-\|\tilde{z}_k-z_{k-1}\|^{2}_{M}+2\langle M(z_{k-1}-z_k),z-\tilde{z}_k \rangle,
\end{align*}
which, combined with \eqref{breg-cond1}, proves the desired inequaliy.

(b) Since  $M(z_{k-1}-z_k)\in T(\tz_k)$ and $0 \in T(z^*)$,   we have $\langle M(z_{k-1}-z_k),\tilde{z}_k-z^* \rangle\geq0$.
Hence,  the first inequality in  (b) follows from  (a) with $z=z^*$. Now, the second inequality in (b) follows from the fact that $\sigma\leq 1$.
\end{proof}

\noindent
 {\bf Proof of Theorem~\ref{th:pointwiseHPE}:}
The inclusion $0\in M(z_{k}-z_{k-1})+ T(\tilde{z}_k)$ holds due to \eqref{breg-subpro}. 
 It follows   from \eqref{fact} with $Q=M$ that, for every $j \ge 1$,
\begin{align*}
\|z_{j}-z_{j-1}\|^2_{M}&\le 2(\|\tilde{z}_{j}-z_{j-1})\|^{2}_{M} + \|\tilde{z}_{j}-z_{j}\|^{2}_{M}) 
 \leq  2(1+\sigma)\|\tilde{z}_{j}-z_{j-1}\|^{2}_{M} + 2(\eta_{j-1}-\eta_j)
\end{align*}
where  the last inequality is due to   \eqref{breg-cond1}. 
Now, if $z^* \in T^{-1}(0)$, we obtain from  Lemma \ref{lema_desigualdades}(b)
\[
(1-\sigma)\|\tilde{z}_{j}-z_{j-1}\|^{2}_{M} \leq \|z^*-z_{j-1}\|^{2}_{M} -\|z^*-z_{j}\|^{2}_{M} +\eta_{j-1}-\eta_{j}, \quad \forall\, j\geq 1. 
\]
Combining the last two  inequality, we get
\begin{align*}
(1-\sigma)\sum_{j=1}^k \|z_{j}-z_{j-1}\|^2_{M}&\leq 2(1+\sigma)\left(\|z^*-z_{0}\|^{2}_{M} -\|z^*-z_{k}\|^{2}_{M} +\eta_{0}-\eta_{k}\right)
+2(1-\sigma)(\eta_0- \eta_k)\\
&\leq  2(1+\sigma)\|z^*-z_{0}\|^{2}_{M} + 4\eta_0.
\end{align*}
Hence, as $\sigma< 1 $, we obtain 
\[
\min_{i=1,\ldots,k}\|z_{i}-z_{i-1}\|^2_{M}\leq \frac{1}{k(1-\sigma)}\left(2(1+\sigma)\|z^*-z_{0}\|^{2}_{M}+ 4\eta_0\right).
\]
Therefore, the desired inequality follows from the latter inequality and the definition of $d_0$ in \eqref{d0HPE}.
\hfill{ $\square$}
\\[1mm]

\noindent {\bf Proof of Theorem~\ref{th:ergHPE}:}
The inclusion $0\in Mr^a_k+T^{[\varepsilon_k^a]}(\tz^a_k)$ follows from \eqref{breg-subpro},    \eqref{SeqErg}, and Theorems~\ref{for:trans}(a). 
Using \eqref{SeqErg}, it is easy see that for any $z^{\ast}\in T^{-1}(0)$ 
\[
k r^a_k = z_k-z_0 = (z^*-z_0)+(z_k-z^{*}).
\]
Hence, from inequality \eqref{fact} with $Q=M$ and Lemma~\ref{lema_desigualdades}(b), we have
\[
k^{2} \norm{r^a_k}^{2}_{M} \leq 2( \norm{z^*-z_0}^{2}_{M} + \norm{z^*-z_k}^{2}_{M})\leq 4(\norm{z^*-z_0}_{M}^{2} +\eta_0).
\]
Combining the above inequality with definition of $d_0$, we obtain the bound on $\|r_k^a\|_{M}$. 
Let us  now to prove the bound on $\varepsilon_k^a$. From Lemma~\ref{lema_desigualdades}(a), we have
\[ 
2\sum_{i=1}^k \langle M(z_{i}-z_{i-1}), z-\tilde{z}_i\rangle\leq \norm{z-z_0}_{M}^{2}-\norm{z-z_k}_{M}^{2}+\eta_0-\eta_k
\leq \norm{z-z_0}_{M}^{2}+\eta_0,
\]
for every $z\in\Z$. Letting $z=\tilde z^a_{k}$ and using \eqref{SeqErg},      we get 
\begin{equation}\label{eq:estim_rhok}
2k \varepsilon_k^a\leq \norm{\tilde{z}^{a}_{k}-z_0}_{M}^{2}+\eta_0 
\leq \frac{1}{k}\sum_{i=1}^k\norm{\tilde{z}_{i}-z_0}_{M}^{2}+\eta_0
\leq \max_{i=1,\ldots,k}\norm{\tilde{z}_{i}-z_0}_{M}^{2}  +\eta_0
\end{equation}
where the second inequality is due to convexity of the function $\|\cdot\|^{2}_{M}$, which also 
implies that, for every $i \geq 1$  and $z^* \in T^{-1}(0)$,
\[
\norm{\tilde z_{i}-z_0}^{2}_{M} 
\leq 3\left[\norm{\tilde{z}_{i}-z_i}^{2}_{M}+\norm{z^*-z_i}^{2}_{M}+\norm{z^*-z_0}^{2}_{M} \right].
\]
Hence, using \eqref{breg-cond1} and  twice Lemma \ref{lema_desigualdades}(b), it  follows,  for every $i \geq 1$  and $z^* \in T^{-1}(0)$, that
\begin{align*}
\norm{\tilde z_{i}-z_0}^{2}_{M} &\leq 3 \left[ \sigma\norm{\tilde{z}_{i}-z_{i-1}}^{2}_{M}+\eta_{i-1}+\norm{z^*-z_{i-1}}^{2}_{M}+\eta_{i-1}+
\norm{z^*-z_0}^{2}_{M}\right] \\
& \leq 3\left[ \sigma\norm{\tilde{z}_{i}-z_{i-1}}^{2}_{M}+2(\norm{z^*-z_{i-1}}^{2}_{M}+\eta_{i-1})+\norm{z^*-z_0}^{2}_{M}\right] \\
& \leq 3\left[ \sigma\norm{\tilde{z}_{i}-z_{i-1}}^{2}_{M}+3\norm{z^*-z_0}^{2}_{M}+2\eta_{0}\right], 
\end{align*}
which,  combined with \eqref{eq:estim_rhok} and definitions of $\rho_k$ in \eqref{def:rho}, yields
\[
2k \varepsilon_k^a\leq 3\left[3\norm{z^*-z_0}^{2}_{M} +\sigma \rho_k\right]+7\eta_{0}\leq 3\left[3(\norm{z^*-z_0}^{2}_{M}+\eta_{0})+\sigma\rho_k\right].
\]
Thus,  the bound on $\varepsilon_k^a$ now follows from the definition of the $d_0$ in \eqref{d0HPE}. 

It remains to prove the second part of the theorem.
\\
(a) if $\sigma<1$, then it follows from Lemma \ref{lema_desigualdades}(b), for every $i \ge 1$ and $z^* \in T^{-1}(0)$, that 
\[
(1-\sigma)\norm{\tilde{z}_{i}-z_{i-1}}^{2}_{M}\leq \norm{z^*-z_{i-1}}^{2}_{M}+\eta_{i-1} \leq \norm{z^*-z_{0}}^{2}_{M}+\eta_0.
\]
Hence, in view of definitions of $\rho_k$ and $d_0$, we obtain \eqref{def:tauk}.
\\
(b) If $\Dom\, T$ is bounded, then it follows from 
inequality \eqref{fact} with $Q=M$, and Lemma \ref{lema_desigualdades}(b), for every $i \ge 1$ and $z^* \in T^{-1}(0)$, that 
\[
\norm{\tilde{z}_{i}-z_{i-1}}^{2}_{M}\leq 2 \left[\norm{z^*-z_{i-1}}^{2}_{M}+\norm{\tilde{z}_{i}-z^*}^{2}_{M} \right]
\leq 2 \left[ \norm{z^*-z_0}^{2}_{M} +\eta_0 + D \right]
\]
which, combined with definitions of $\rho_k$ and $d_0$, proves the desired result.   \hfill{ $\square$}


\def\cprime{$'$}

\end{document}